\newtheorem{thm}{Theorem}
\newtheorem{cor}[thm]{Corollary}
\newtheorem{prop}[thm]{Proposition}
\newtheorem{lem}[thm]{Lemma}
\newtheorem{definition}[thm]{Definition}
\newtheorem{remark}[thm]{Remark}
\newcommand{\OM}[1]{\Omega^1_{#1}}
\newcommand{\App}{\mathrm{App}} 
\newcommand{\pp}{\zeta} 
\DeclareMathOperator{\tr}{tr}
\DeclareMathOperator{\diag}{diag}
\DeclareMathOperator{\Res}{Res}
\def\Z{\mathbb Z}
\def\C{\mathbb C}
\def\P{\mathbb P}
\def\O{\mathcal O}
\def\U{\mathcal U}
\def\V{\mathcal V}
\def\W{\mathcal W}
\def\p{\boldsymbol{p}}
\def\Mu{\boldsymbol{\mu}}
\def\Con{\mathcal{M}}
\def\OMD{\mathbf{\Omega}_D}
\def\OM0{\mathbf{\Omega}_0}
\begin{document}

\author[K. Diarra]{Karamoko DIARRA}
\address{ DER de Math\'ematiques et d'informatique, FAST, Universit\'e des Sciences, des Techniques et des Technologies de Bamako, BP: E $3206$ Mali.}
\email{karamoko.diarra2005@gmail.com}
 
\author[F. Loray]{Frank LORAY}
\address{ Univ Rennes, CNRS, IRMAR - UMR 6625, F-35000 Rennes, France.}
\email{frank.loray@univ-rennes1.fr}

\title[Normal form]{Normal forms for rank two linear irregular differential equations and moduli spaces.}

\date{\bf \today}
\subjclass{Primary 34M03,32G34,53D30 ; Secondary 34M55, 34M56}
\thanks{We thank CNRS, Universit\'e de Rennes 1, Henri Lebesgue Center, ANR-16-CE40-0008 project ``{\it Foliage}'' for financial support and CAPES-COFECUB Ma932/19 project.}
\keywords{Ordinary differential equations, Normal forms}

\begin{abstract} We provide a unique normal form for rank two irregular connections on the Riemann sphere.
In fact, we provide a birational model where we introduce apparent singular points and where the bundle
has a fixed Birkhoff-Grothendieck decomposition. The essential poles and the apparent poles provide two
parabolic structures. The first one only depends on the formal type of the singular points. The latter one 
determines the connection (accessory parameters). As a consequence, an open set of the corresponding 
moduli space of connections is canonically identified with an open set of some Hilbert scheme of points on 
the explicit blow-up of some Hirzebruch surface. This generalizes to the irregular case previous results obtained
by Szab\'o. Our work is more generally related to ideas and descriptions of Oblezin, Dubrovin-Mazzocco,  
and Saito-Szab\'o in the logarithmic case. After the first version of this work appeared, Komyo used our
normal form to compute isomonodromic Hamiltonian systems for irregular Garnier systems.
\end{abstract}

\maketitle
\setcounter{tocdepth}{1}
\sloppy
\tableofcontents

\section{Introduction}

In this paper, we deal with rank 2 meromorphic connections on the Riemann sphere $\mathbb P^1=\mathbb P^1(\mathbb C)$,
which consist in the data $(E,\nabla)$ of a rank 2 holomorphic vector bundle $E\to \mathbb P^1$, and a meromorphic connection 
$\nabla:E\to E\otimes\Omega^1_{\mathbb P^1}(D)$, i.e. $D$ is an effective divisor (the polar divisor) on $\mathbb P^1$, and
$\nabla$ is a $\mathbb C$-linear map satisfying Leibniz rule: for any open set $U\subset\mathbb P^1$, we have
$$\nabla(f\cdot s)=df\otimes s+f\cdot \nabla(s)$$
for any holomorphic function $f$ on $U$ and section $s:U\to E$ (see section \ref{sec:LocalFormalData} for more details).
Recall that the connection is said {\it logarithmic} (or {\it Fuchsian}, or {\it regular-singular}) when it has simple poles, i.e. when 
the polar divisor $D$ is reduced. In this paper, we are particularly interested in the {\it irregular} setting, 
i.e. non logarithmic connections, when $D$ has points with multiplicity.

\subsection{Main results}
Fix local formal data $\Lambda$ (see section \ref{sec:LocalFormalData}) including the polar locus $D$.
It is the local data of a global meromorphic connection $(E,\nabla)$ provided that it satisfies Fuchs relation:
the sum of residual eigenvalues equals $-\deg(E)$, and must therefore be an integer.
The moduli space of those $\Lambda$-connections up to bundle isomorphisms can be constructed
by Geometric Invariant Theory. After fixing convenient weights $\Mu$, the moduli space $\Con_{\Mu}^\Lambda$ 
of $\Mu$-stable parabolic $\Lambda$-connections $(E,\nabla,\l)$ forms 
an irreducible quasi-projective variety 
of dimension $2(n-3)$ where $n=\deg(D)$ (see \cite{IIS,InabaSaito,Inaba}).
Moreover, $\Mu$-stable connections form a Zariski open subset which is smooth, and contains all irreducible connections.
In fact, the role of weights and parabolic data are mainly here to avoid bad singularities in the quotient,
and are useful only for special choices of $\Lambda$; for generic $\Lambda$,
the parabolic structure is automatically determined by $(E,\nabla)$ and any connection is irreducible, and thus $\Mu$-stable whatever 
is the choice of weights $\Mu$; therefore, $\Con_{\Mu}^\Lambda$ can be viewed as the moduli space of $\Lambda$-connections 
$(E,\nabla)$.
In this paper, we would like to describe a natural open set of this moduli space in an explicit way,
together with an explicit universal family.
Natural operations on parabolic connections like twisting by rank one meromorphic connections,
or applying convenient birational bundle modifications provide isomorphisms between moduli spaces,
usually called canonical transformations. In particular, this has the effect to shift $\deg(E)$ by arbitrary integers, and 
we can assume $\deg(E)=1$ without loss of generality. 
We generalize\footnote{It has been recently generalized to higher rank and genus in the logarithmic setting
by Saito and Szab\'o in \cite{SS}.}  to the irregular setting a construction appearing in a paper of Oblezin \cite{Oblezin}
 and Szab\'o \cite[Corollary 2]{Szilard} allowing to define a natural birational model for $\Con_{\Mu}^\Lambda$:
\begin{thm}\label{thm:BiratModel} 
Let $\Lambda$ be as before. Then, there is a natural birational map to the Hilbert scheme
\begin{equation}\label{eq:BiratSym}
\Phi:\Con_{\Mu}^\Lambda\dashrightarrow\mathrm{Hilb}^{(n-3)}(\OMD)
\end{equation}
where $\OMD$ denotes the total space of the line bundle $\Omega^1_{\mathbb P^1}(D)$.
\end{thm}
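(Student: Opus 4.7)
The plan is to interpret $\Phi$ via apparent singularities arising from a fixed Birkhoff--Grothendieck splitting of the underlying bundle. After applying the canonical twist so that $\deg(E)=1$, I work on the Zariski open $U\subset\Con_{\Mu}^\Lambda$ where $E$ admits the generic splitting $E\simeq\OO{\mathbb P^1}\oplus\OO{\mathbb P^1}(1)$ and where, at each pole $p\in\mathrm{Supp}(D)$, the parabolic line $\ell_p$ is transverse to the degree-zero summand. This is the birational source of $\Phi$; its complement is a proper closed set by semicontinuity of the splitting type.

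Forward direction: on $U$, the ``second fundamental form'' of $\nabla$ with respect to the splitting is a nonzero morphism
$$\beta\colon\OO{\mathbb P^1}\longrightarrow\OO{\mathbb P^1}(1)\otimes\OM{\mathbb P^1}(D),$$
hence a global section of $\OO{\mathbb P^1}(n-1)$ with $n-1$ zeros counted with multiplicity. A local normal form calculation at each pole, using the prescribed $\Lambda$ together with the Fuchs relation, shows that exactly two of these zeros are absorbed by the essential singular behaviour, leaving $n-3$ free zeros $p_1,\dots,p_{n-3}\in\mathbb P^1\setminus\mathrm{Supp}(D)$: the apparent singular points. At each $p_i$ the diagonal part of the connection matrix evaluates to a well-defined element of the fibre $\OM{\mathbb P^1}(D)|_{p_i}$, canonically lifting $p_i$ to a point $\tilde p_i\in\OMD$. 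One then sets
$$\Phi(E,\nabla)\;\mathrel{:=}\;\sum_{i=1}^{n-3}[\tilde p_i]\;\in\;\mathrm{Hilb}^{(n-3)}(\OMD).$$

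Inverse direction: given a length-$(n-3)$ subscheme $Z\subset\OMD$ whose image in $\mathbb P^1$ consists of $n-3$ distinct points disjoint from $\mathrm{Supp}(D)$, I reconstruct a unique connection in $U$. The space of connection matrices on $\OO{\mathbb P^1}\oplus\OO{\mathbb P^1}(1)$ with polar locus contained in $D$ is a finite-dimensional affine space by Riemann--Roch; imposing the formal type $\Lambda$ at each point of $D$ and apparentness with the prescribed lift $\tilde p_i$ at each point of $Z$ cuts out a generically zero-dimensional solution set. The dimension equality
$$\dim\Con_{\Mu}^\Lambda\;=\;2(n-3)\;=\;\dim\mathrm{Hilb}^{(n-3)}(\OMD)$$
then forces the solution to be a single generic point, yielding a rational section of $\Phi$ and hence birationality.

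The hard part will be the careful bookkeeping in the forward direction: showing precisely that $\Lambda$ absorbs two of the $n-1$ zeros of $\beta$ (rather than some other number that might depend on the Katz rank at each pole), and that the lift $p_i\rightsquigarrow\tilde p_i$ via the diagonal of the connection matrix is canonical, i.e.\ independent of the residual diagonal gauge preserving the splitting. This amounts to an explicit local computation of $\nabla$ at each point of $\mathrm{Supp}(D)$ in a trivialization adapted to the fixed Birkhoff--Grothendieck decomposition, compared with the formal normal form encoded in $\Lambda$. Once this bookkeeping is in place, the Riemann--Roch dimension count in the inverse direction delivers injectivity and, by the matching dimensions, birationality.
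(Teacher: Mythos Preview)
Your proposal has a genuine gap in the choice of cyclic subbundle. You take the second fundamental form of the summand $\OO{\mathbb P^1}$, i.e.\ the map
\[
\beta\colon \OO{\mathbb P^1}\longrightarrow \OO{\mathbb P^1}(1)\otimes\OM{\mathbb P^1}(D),
\]
but the degree-zero summand in $E\simeq\OO{}\oplus\OO{}(1)$ is \emph{not} canonical: the automorphism group of $E$ contains all lower-triangular matrices $\bigl(\begin{smallmatrix}u&0\\ F&v\end{smallmatrix}\bigr)$ with $\deg F\le 1$, and these move the $\OO{}$-summand inside a two-parameter family. Under such a gauge change the lower-left entry of the connection matrix (your $\beta$) is genuinely modified, so its zero divisor is not an invariant of $(E,\nabla)$. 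Your transversality condition on the parabolic lines is an open condition and does not pin down a unique $\OO{}$. Consequently neither the ``$n-1$ zeros'' nor the subsequent ``absorption of two of them by $\Lambda$'' is well-defined; there is no canonical mechanism that eats exactly two zeros.

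The paper instead takes the \emph{destabilizing} subbundle $L=\OO{}(1)\subset E$, which is unique, and forms
\[
\varphi_\nabla\colon \OO{}(1)\hookrightarrow E\xrightarrow{\nabla} E\otimes\OM{}(D)\twoheadrightarrow (E/\OO{}(1))\otimes\OM{}(D)\simeq\OO{}(n-2).
\]
This is a section of $\OO{}(n-3)$, giving exactly $n-3$ apparent points with no absorption needed, and is invariant under $\mathrm{Aut}(E)$ (the upper-right entry only rescales). The lift $q_j\rightsquigarrow\mathbf p_j\in\OMD$ is then obtained not from a ``diagonal part'' but by first pushing $\nabla$ forward along $\mathrm{id}\oplus\varphi_\nabla$ to a companion-normalized connection $\nabla_0$ on $\OO{}\oplus\OM{}(D)$, and reading off the $0$-eigendirection of the residue at $q_j$; this is what makes the lift canonical.

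For the inverse, your dimension-count argument is insufficient: matching dimensions plus a generically zero-dimensional fibre only gives a generically finite map, not degree one. The paper proves birationality by an explicit reconstruction: the principal parts of the companion matrix at the poles $t_i$ are determined by $\Lambda$ alone (a direct computation with $\tr$ and $\det$), and the remaining polynomial coefficient is fixed by the $n-3$ linear conditions that each $q_j$ be apparent with the prescribed eigendirection $\mathbf p_j$. This yields a unique $\nabla_0$, hence an honest inverse on the relevant open set.
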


Recall that the Hilbert scheme $\mathrm{Hilb}^{(n-3)}(S)$ of a smooth surface $S$ is the minimal 
reduction of singularities by blow-up of the symmetric product 
$$\mathrm{Hilb}^{(n-3)}(S)\to\mathrm{Sym}^{(n-3)}(S)=\underbrace{S\times\cdots\times S}_{n-3\ \text{times}}/\sigma_{n-3}.$$ 

There is also a natural holomorphic symplectic structure on $\Con_{\Mu}^\Lambda$ (see 
\cite{Kimura,Boalch,Kawamuko,InabaSaito,Inaba})
that generalizes the symplectic structure of Atyiah-Bott, Goldman and Iwasaki in the regular(-singular) case.
On the other hand, the Liouville form on the total space $\OM0$ of the cotangent bundle $\Omega^1_{\mathbb P^1}$ induces a symplectic structure
on $\mathrm{Hilb}^{(n-3)}(\OM0)$, and therefore, via the natural map $\Omega^1_{\mathbb P^1}\to\Omega^1_{\mathbb P^1}(D)$, 
on a Zariski open set of $\mathrm{Hilb}^{(n-3)}(\OMD)$.
After the first version of this paper was put on the arXiv, Komyo used our normal form
to produce the associated isomonodromic Hamiltonian system in \cite{Komyo}; in particular, he proved

\begin{thm}[Komyo \cite{Komyo}]\label{thm:KomyoSymplectic}
The map $\Phi$ is symplectic\footnote{Up to a sign, depending on authors' conventions.}.
\end{thm}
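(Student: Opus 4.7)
The plan is to verify the identity $\Phi^*\omega_{\mathrm{Hilb}} = \pm\,\omega_{\Con}$ by explicit computation in Darboux coordinates induced by the normal form of Theorem~\ref{thm:BiratModel}. Thanks to that normal form, a generic $\Lambda$-connection produces $(n-3)$ apparent singularities $q_1,\ldots,q_{n-3}\in\mathbb P^1$ equipped with parabolic directions, which I encode by dual scalars $p_i$ read off from the local expansion of the connection matrix in the fixed Birkhoff--Grothendieck frame. The map $\Phi$ then reads $(E,\nabla,\ell)\mapsto\{(q_i,p_i)\}$, and on the open subset of $\mathrm{Hilb}^{(n-3)}(\OMD)$ parametrizing disjoint configurations the Liouville-type symplectic form becomes $\sum_i dp_i\wedge dq_i$.

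The bulk of the work is to express the Atiyah--Bott--Goldman--Iwasaki--Inaba symplectic form on $\Con_{\Mu}^\Lambda$ as a sum of local residues and to identify it with $\sum_i dp_i\wedge dq_i$ in these coordinates. For two infinitesimal deformations $\dot\nabla_1,\dot\nabla_2$ represented by $\mathrm{End}(E)$-valued \v{C}ech $0$-cochains $\alpha_1,\alpha_2$ whose coboundaries recover the variations, one has the residue formula
$$\omega_{\Con}(\dot\nabla_1,\dot\nabla_2)=\sum_{x\in\mathbb P^1}\Res_x\tr(\alpha_1\nabla\alpha_2-\alpha_2\nabla\alpha_1).$$
I would choose trivializations adapted to the normal form so that $\partial/\partial q_i$ and $\partial/\partial p_i$ are represented by deformations supported near $q_i$; their pairing then localizes at the apparent singularity $q_i$, where a direct local rank-two computation gives $dp_i\wedge dq_i$.

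The hard part is to control the contributions at the essential (irregular) poles. Although the formal data $\Lambda$ is frozen on $\Con_{\Mu}^\Lambda$, the adapted trivialization near each irregular pole depends analytically on the moving parameters $(q_i,p_i)$, so the residues at $D$ do not vanish \emph{a priori}. My strategy is to pass to formal coordinates at each pole in which the connection takes its canonical formal form, after a gauge transformation depending on $(q_i,p_i)$; in these formal gauges, the Darboux derivatives annihilate the formal part of $\nabla$ to the required order, forcing the corresponding residues at $D$ to vanish. Summing the local contributions and applying the residue theorem on $\mathbb P^1$ then yields $\Phi^*\omega_{\mathrm{Hilb}}=\pm\,\omega_{\Con}$ on the Darboux open set, and density of this open set, together with irreducibility of $\Con_{\Mu}^\Lambda$, extends the identity to the full domain of $\Phi$. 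This is essentially the route Komyo follows in~\cite{Komyo}, where the irregular extension of Inaba--Saito's residue pairing is worked out in detail.
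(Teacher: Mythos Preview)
The paper does not give a self-contained proof of this theorem: it is attributed to Komyo \cite{Komyo}, and the only internal verification is in Section~\ref{sec:Scalar}, where the authors check a few special cases (the Fuchsian/Garnier case and one irregular degeneration with $n\le 5$) by matching their coordinates $(q_j,p_j)$ against the explicit Darboux coordinates already appearing in Okamoto, Kimura, Dubrovin--Mazzocco, and Kawamuko. That is, the paper's ``proof'' is a comparison with pre-existing explicit Hamiltonian normal forms, not a direct computation of the Atiyah--Bott/Inaba--Saito pairing.

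Your proposal is therefore not the paper's route but a sketch of what you believe Komyo's argument to be: compute $\omega_{\Con}$ via the residue pairing in the normal-form trivialization and identify it with $\sum_j dp_j\wedge dq_j$. The outline is reasonable, and the local contribution at an apparent point $q_j$ is indeed an elementary rank-two residue computation. The genuine gap in your sketch is the step at the essential poles. Your claim that ``in these formal gauges, the Darboux derivatives annihilate the formal part of $\nabla$ to the required order, forcing the corresponding residues at $D$ to vanish'' is exactly the delicate point, and as stated it is not justified: the formal gauge bringing $\nabla_0$ to its Hukuhara--Levelt--Turrittin form depends on \emph{all} coefficients of $\Omega_0$, hence on $(q_j,p_j)$, and you have not explained why the $M^{-1}dM$ contribution from varying this gauge does not produce a nonzero residue. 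In the irregular setting this is precisely where the work lies (and where Inaba--Saito's construction of the pairing is needed); you would need to exhibit explicit \v{C}ech representatives for $\partial/\partial q_j$ and $\partial/\partial p_j$ and compute, not assert, the vanishing at each $t_i$. Without that, the argument is a plausible strategy rather than a proof.
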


In section \ref{sec:Scalar}, we easily check that the map $\Phi$ is symplectic in the classical Garnier case
(i.e. the logarithmic case, with simple poles) by comparing with the symplectic structure given in \cite{Okamoto,Kimura,DM}.
We also checked this in the irregular case, up to 5 poles counted with multiplicity by comparing with formulae
of Kimura and Kawamuko \cite{Kimura,Kawamuko}. 

\subsection{Apparent map and companion normalization}
In the theory of linear system of differential equations with rational coefficients,
it is often useful to reduce to a single scalar differential equation, whose order corresponds
to the rank of the initial system; this is done by choosing a {\it cyclic vector} and using it to reduce the matrix 
into a {\it companion form}, which is the matrix form of a scalar equation.  This operation (not unique in general)
introduces new poles for the differential equations, called {\it apparent singular points}. 
The construction of the map $\Phi$ in the above Theorems is inspired by ideas developped in 
\cite{Oblezin,DM,Szilard,LSS,LS,KomyoSaito,SS} to provide a geometric counterpart to this classical
operation with the cyclic vector. 
The very rough idea is to consider a connection on non trivial bundles so that there is a 
natural rank one subbundle $L\subset E$ which will play the role of a canonical cyclic vector. 
Precisely, for a generic $(E,\nabla)$ with $\deg(E)=1$, we have\footnote{From now on, 
we omit the subscript ${\mathbb P^1}$ in the line bundles for notational simplicity.}
$E\simeq\O\oplus\O(1)$ and we can choose $L=\mathcal O(1)$ (the unique destabilizing subsheaf).
In fact, the subset
\begin{equation}\label{eq:DefU}
\U=\{(E,\nabla)\in\Con_{\Mu}^\Lambda \ ;\  E\simeq\mathcal O\oplus\O(1)\ \text{and $\O(1)$ is not $\nabla$-invariant}\}
\end{equation}
is Zariski open in $\Con_{\Mu}^\Lambda$. For such a generic connection,
we associate a non trivial $\mathcal O$-linear morphism 
\begin{equation}\label{eq:sequenceApparent}\xymatrix{
L\ar@{^{(}->}[r] \ar@/_2pc/[rrrr]^-{\varphi_\nabla} & E \ar[r]^-{\nabla} & E\otimes\Omega^1(D) \ar[rr]^-{\text{quotient}} &&
(E/L)\otimes\Omega^1(D) \simeq\Omega^1(D).
}\end{equation}
The {\it apparent map} then associates the effective zero-divisor
of $\varphi_\nabla$:
\begin{equation}\label{eq:ApparentMap}
\App\ :\ \U
\to\vert\mathcal O(n-3)\vert
\ ;\ (E,\nabla)\mapsto\mathrm{div}(\varphi_{\nabla}).
\end{equation}
Then, we can define the birational bundle transformation
\begin{equation}\label{eq:BiratModif}
\phi_\nabla:=\text{id}\oplus\varphi_\nabla:\ \mathcal O\oplus\mathcal O(1)\ \dashrightarrow\ \mathcal O\oplus\Omega^1(D)
\end{equation}
and consider the pushed-forward connection $(\phi_\nabla)_*\nabla$ on $E_0=\mathcal O\oplus\Omega^1(D)$.
This operation introduces new poles located on the support of the divisor $Q=\App(E,\nabla)$; these are 
apparent singular points (local horizontal sections are holomorphic) whence the name of $\App$.
To provide precise statement, it is convenient to introduce the following Zariski open subsets
\begin{equation}\label{eq:DefW}
W=\{Q\in \vert\mathcal O(n-3)\vert\ ;\ Q\ \text{is reduced and has disjoint support with}\ D\} \\
\end{equation}
and
\begin{equation}\label{eq:DefV}
\V=\U\cap\App^{-1}(W)\subset\Con_{\Mu}^\Lambda
\end{equation}
In section \ref{sec:GlobalNormalization}, we prove the following 

\begin{prop}[Companion Form]\label{prop:DefNabla0}Assume $(E,\nabla)\in\V$.
Then, the transformed connection $(\phi_\nabla)_*\nabla$ has polar divisor $D+Q$,
and is equivalent, up to bundle automorphisms of $E_0=\mathcal O\oplus\Omega^1(D)$, 
to a unique connection $\nabla_0$ satisfying
\begin{itemize}
\item the restriction of $\nabla_0$ to the first factor $\mathcal O\subset E_0$ is the trivial connection: 
$$\nabla_0\vert_{\mathcal O}\equiv0;$$
\item the morphism $\varphi_{\nabla_0}:\Omega^1(D)\to\Omega^1(D)$ defined by the ``cyclic vector''
$\Omega^1(D)\subset E_0$ similarly as in (\ref{eq:sequenceApparent}) is the identity:
$$\varphi_{\nabla_0}=\mathrm{id}_{\Omega^1(D)}.$$
\end{itemize}
Moreover, at each point of $Q$,
the residual eigenvalues are $-1$ and $0$, and the $(-1)$-eigendirection is contained in 
the subbundle $\Omega^1(D)$.
\end{prop}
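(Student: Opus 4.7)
The plan is to prove the three claims---polar divisor $D+Q$, existence and uniqueness of $\nabla_0$, and residue structure at $Q$---by combining a global observation about $\varphi_{\nabla_0}$ with a local pushforward computation at each $q\in Q$ and a global gauge normalization inside $\mathrm{Aut}(E_0)$.

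The global observation is that $\varphi_{(\phi_\nabla)_*\nabla}=\mathrm{id}_{\Omega^1(D)}$ holds automatically, by the very definition of $\phi_\nabla=\mathrm{id}\oplus\varphi_\nabla$: for $s\in\Omega^1(D)\subset E_0$, projecting $\nabla(\phi_\nabla^{-1}(s))$ onto the $\mathcal O$-summand of $E$ and then identifying via $\phi_\nabla|_{\mathcal O}=\mathrm{id}$ returns $\varphi_\nabla(\varphi_\nabla^{-1}(s))=s$. So the condition $\varphi_{\nabla_0}=\mathrm{id}_{\Omega^1(D)}$ holds before any gauge normalization, and preserving it under the gauge action of $\mathrm{Aut}(E_0)$ restricts attention to those $\psi=\left(\begin{smallmatrix}\lambda & 0\\g & \mu\end{smallmatrix}\right)$ with $\lambda=\mu$. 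For the local pushforward, fix $q\in Q$ with local coordinate $z$, local frame $(e_1,e_2)$ of $E\simeq\mathcal O\oplus\mathcal O(1)$, and new frame $(e_1,dz)$ of $E_0$. Since $Q$ is reduced and $q\notin D$, $\varphi_\nabla(e_2)=B\,dz$ with $B=(z-q)u$, $u(q)\ne 0$; writing the local connection matrix of $\nabla$ as $\Omega=\left(\begin{smallmatrix}A\,dz & B\,dz\\C\,dz & D\,dz\end{smallmatrix}\right)$, the standard gauge formula with $M=\mathrm{diag}(1,B)$ gives
\[
\Omega'=M\Omega M^{-1}-(dM)M^{-1}=\begin{pmatrix}A\,dz & dz\\BC\,dz & D\,dz-dB/B\end{pmatrix}.
\]
The only new pole at $q$ comes from $-dB/B$ in the $(2,2)$ entry and is simple with residue $-1$; away from $Q$, $M$ is holomorphic and invertible, so the polar divisor is exactly $D+Q$. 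The residue matrix $\mathrm{Res}_q\Omega'=\mathrm{diag}(0,-1)$ furnishes the claimed eigenvalues $0,-1$, with the $(-1)$-eigendirection along $e_2'=dz$, i.e.\ in $\Omega^1(D)\subset E_0$.

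For the normalization, note that when $n\ge 3$ one has $\deg\Omega^1(D)>\deg\mathcal O$ and $H^0(\Omega^1(D)^\vee)=0$, so $\mathrm{Aut}(E_0)$ is lower-triangular with $\lambda,\mu\in\mathbb C^*$ and $g\in H^0(\Omega^1(D))$. With $\lambda=\mu=t$ forced by the previous paragraph, a short computation shows that the $(1,1)$-block transforms to $(A-g/t)\,dz$, so the condition $\nabla_0|_{\mathcal O}\equiv 0$---that the induced connection on the $\mathcal O$-summand is trivial---determines $g$ uniquely as the global section $t\,\omega_{11}\in H^0(\Omega^1(D))$, where $\omega_{11}=A\,dz$ is the globally defined $(1,1)$-entry of $\Omega$ (the connection induced by $\nabla$ on $\mathcal O\subset E$). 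The only remaining freedom is the scalar gauge $t\,\mathrm{Id}_{E_0}$, which acts trivially on connections, so $\nabla_0$ is uniquely determined in its gauge class. The residue at $q$ after normalization equals $\psi(q)\,\mathrm{Res}_q\Omega'\,\psi(q)^{-1}$ (the term $(d\psi)\psi^{-1}$ being holomorphic at $q\notin D$), and this conjugation preserves the eigenvalues and keeps the $(-1)$-eigenline along $\Omega^1(D)$.

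\emph{Main obstacle.} The delicate point is the correct interpretation of $\nabla_0|_{\mathcal O}\equiv 0$: it means that the $(1,1)$-block of $\nabla_0$ is the trivial connection $d$, \emph{not} that $\mathcal O\subset E_0$ is $\nabla_0$-invariant. The latter would be a strictly stronger condition, and---as the companion-matrix shape of $\nabla_0$ makes clear---is generically false; the apparent pole at $q$ is precisely the obstruction to extending this copy of $\mathcal O\subset E_0$ to a horizontal subbundle.
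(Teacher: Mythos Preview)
Your proof is correct and follows essentially the same route as the paper: compute the pushforward in a local frame via the diagonal gauge $M=\mathrm{diag}(1,B)$, read off the polar divisor $D+Q$ and the residue $\mathrm{diag}(0,-1)$ at each $q_j$, then use the lower-triangular structure of $\mathrm{Aut}(E_0)$ to normalize the $(1,1)$-entry to zero, with the constraint $\lambda=\mu$ leaving only the scalar freedom. The paper does exactly this in explicit global polynomial coordinates (their $\Omega_1$, then $F=-A$), while you phrase the $\varphi_{\nabla_0}=\mathrm{id}$ condition as a tautological consequence of $\phi_\nabla=\mathrm{id}\oplus\varphi_\nabla$ and work slightly more invariantly; your closing remark on the correct reading of $\nabla_0|_{\mathcal O}\equiv 0$ (as the induced connection on the quotient/$(1,1)$-block, not $\nabla_0$-invariance of $\mathcal O$) is a useful clarification that the paper leaves implicit.
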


\subsection{Construction of $\Phi$}
Let $\nabla_0$ as in Proposition \ref{prop:DefNabla0}, and denote $Q=\{q_1,\ldots,q_{n-3}\}$.
The $0$-eigendirection of $\nabla_0$ at $q_j$ corresponds to a point $\p_j\in\mathbb P(E_0)$.
On the other hand, $\mathbb P(E_0)$ can be viewed as the fiber-compactification of $\OMD$
(the total space of $\Omega^1(D)$), and $\p_j$ is actually contained in that open set. 
We therefore define the map (\ref{eq:BiratSym}) by setting
\begin{equation}\label{eq:BiratSymDef}
\Phi(E_0,\nabla_0)=\{\mathbf p_1,\ldots,\mathbf p_{(n-3)}\} \in \mathrm{Hilb}^{(n-3)}(\OMD).
\end{equation}
The natural projection $\pi:\OMD\to\mathbb P^1;\mathbf p_j\mapsto q_j$ induces a morphism
$$\Pi:\mathrm{Hilb}^{(n-3)}(\OMD)\to\mathrm{Hilb}^{(n-3)}\mathbb P^1
\simeq\vert\mathcal O(n-3)\vert$$
which is Lagrangian. Set $\W=\Pi^{-1}(W)$.
We deduce from our construction of $\Phi$:

\begin{prop}\label{prop:AppIsom}
We have a commutative diagram of morphisms
\begin{equation}\label{eq:AppLagrangian}\xymatrix{\relax
\Con_{\Mu}^\Lambda\supset \V\ar[rr]^-{\Phi}_{\sim}\ar[drr]_{\App} &&\W\ar[d]^{\Pi\vert_{\W}}&\subset\mathrm{Hilb}^{(n-3)}(\OMD)\ar[d]^{\Pi}\\
&& W&{\subset\vert\mathcal O(n-3)\vert }
}\end{equation}
inducing a biregular isomorphism $\V\stackrel{\sim}{\to}\W$. 
\end{prop}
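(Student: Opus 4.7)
The plan is to verify the commutativity of the diagram and then exhibit an explicit algebraic inverse to $\Phi|_\V$. The commutativity is essentially tautological: by construction each $\p_j\in\Phi(E,\nabla)$ lies in the fiber of $\OMD\to\P^1$ over the corresponding point $q_j$ of $\mathrm{div}(\varphi_\nabla)$, so $\Pi\circ\Phi=\App$ on $\V$. The inclusion $\Phi(\V)\subseteq\W$ then follows from $\V=\U\cap\App^{-1}(W)$ and $\W=\Pi^{-1}(W)$. Algebraicity of $\Phi$ is inherited from the construction of Section \ref{sec:GlobalNormalization}: the destabilizing splitting $E\simeq\O\oplus\O(1)$, the morphism $\varphi_\nabla$ of (\ref{eq:sequenceApparent}), the birational modification $\phi_\nabla$, the unique normalization of Proposition \ref{prop:DefNabla0}, and the extraction of the $0$-eigendirection at each $q_j$ are all operations with locally constant cohomological dimension over $\V$.

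For the inverse $\Psi:\W\to\V$, I would start from $\{\p_1,\ldots,\p_{n-3}\}\in\W$ with underlying divisor $Q=\sum q_j$ and directly reconstruct the companion connection $\nabla_0$ on $E_0=\O\oplus\Omega^1(D)$. Writing its connection matrix as $\bigl(\begin{smallmatrix}a_{11}&a_{12}\\ a_{21}&a_{22}\end{smallmatrix}\bigr)$, the normalization of Proposition \ref{prop:DefNabla0} fixes $a_{11}=0$ and determines $a_{12}$ as the canonical section realizing $\varphi_{\nabla_0}=\mathrm{id}_{\Omega^1(D)}$. The remaining entry $a_{22}$ is a section of $\Omega^1(D+Q)$ with prescribed residues $-1$ at the $q_j$ and polar part at $D$ dictated by the trace of $\Lambda$, while $a_{21}$ is a section of $\Omega^1(D)\otimes\Omega^1(D+Q)$ whose residues at the $q_j$ are, via the identification $\P(E_0)\setminus\P(\Omega^1(D))\simeq\OMD$, precisely the data $\p_j$, and whose polar part at $D$ is dictated by the off-diagonal part of $\Lambda$. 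A Riemann--Roch count on $\P^1$ should show that these linear conditions determine $a_{21}$ and $a_{22}$ uniquely. Reversing the birational modification $\phi_\nabla^{-1}$ (which depends only on $Q$) then transports the connection back to the generic bundle $\O\oplus\O(1)$, producing an element $(E,\nabla)\in\V$.

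Each of the above steps is algebraic in the input $\{\p_j\}$, so $\Psi$ is a morphism $\W\to\V$. The identity $\Phi\circ\Psi=\mathrm{id}_\W$ holds by design (the residues of $a_{21}$ reproduce the $\p_j$), and $\Psi\circ\Phi=\mathrm{id}_\V$ follows from the uniqueness assertion in Proposition \ref{prop:DefNabla0}. Combined with the commutativity already noted, this yields the biregular isomorphism $\V\xrightarrow{\sim}\W$ over $W$.

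The main obstacle I expect is verifying that the $\Lambda$-constraints at $D$ combine with the $(q_j,\p_j)$-constraints at $Q$ to give a linear system of full rank on the pair $(a_{21},a_{22})$. The key simplification is that the definition of $W$ forces $Q$ to be reduced and disjoint from $D$, so the local constraint at each $q_j$ (one linear condition on $a_{22}$, one on $a_{21}$) decouples completely from the constraints at $D$; what remains is a Riemann--Roch computation on $\P^1$ and a check that the block of conditions coming from $\Lambda$ alone has the corank predicted by $\dim\V=2(n-3)$. For generic $\Lambda$ this is essentially linear algebra, but it is the only step that uses more than the formalism already set up in the preceding sections.
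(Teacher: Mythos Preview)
Your commutativity argument and the outline for $a_{22}=d_0$ are fine and match the paper. The genuine gap is in your reconstruction of $a_{21}=c_0$. You impose only two kinds of conditions on $a_{21}$: the polar part along $D$ coming from $\Lambda$, and the residue $\pp_j$ at each $q_j$. A Riemann--Roch count on $\P^1$ does \emph{not} give uniqueness here: $a_{21}$ lives in a space of dimension $3n-6$, the $\Lambda$-conditions at $D$ (including $\infty$) impose $n$, and the residues at $Q$ impose $n-3$, leaving exactly $n-3$ free parameters. In the explicit form (\ref{eq:GlobalNormalForm}) this is the polynomial $\tilde C(x)$ of degree $\le n-4$, which your scheme leaves completely undetermined. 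Consequently your $\Psi$ is not well-defined as stated, and worse, for a generic choice of $\tilde C$ the elementary transformation you apply at each $q_j$ will \emph{not} remove the pole: you land on a connection on $\O\oplus\O(1)$ with extra logarithmic poles along $Q$, hence outside $\Con_\Mu^\Lambda$ altogether.

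What pins down $\tilde C$ is precisely the requirement that each $q_j$ be an \emph{apparent} singular point, i.e.\ that the pole disappear after the negative elementary transformation centered at $\p_j$. This is one additional linear condition per $q_j$ beyond fixing the residual eigendirections, and the paper isolates it as Lemma~\ref{Lem:tildeC}: writing out the transformed matrix, holomorphy at $q_j$ forces
\[
\tilde C(q_j)+\hat c_j(q_j)+\pp_j\hat d_j(q_j)-\frac{\pp_j^2}{P(q_j)}=0,
\]
and since the $q_j$ are distinct (this is where $Q\in W$ is used) these $n-3$ interpolation conditions determine $\tilde C$ uniquely. Once you insert this step, your inverse coincides with the paper's construction in Section~\ref{Sec:PhiBirat} (Lemma~\ref{lem:UniversalNabla0}), and the rest of your argument goes through. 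Note also that ``off-diagonal part of $\Lambda$'' is not quite the right phrase: $\Lambda$ records only $\tr$ and $\det$, and it is $[\det(\Omega_0)]^{<-n}$ that fixes the polar part $C_i$ of $a_{21}$ at $t_i$ via (\ref{eq:CiDiFormalData}).
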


Applying Komyo's result  (Theorem \ref{thm:KomyoSymplectic}) we deduce:

\begin{cor}\label{cor:AppLagrangian}
The apparent map defines a Lagrangian fibration on the moduli space $\Con_{\Mu}^\Lambda$,
which is smooth in restriction to the open set $\V\subset\Con_{\Mu}^\Lambda$ with affine fibers $\mathbb C^{n-3}$.
\end{cor}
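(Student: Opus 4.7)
The plan is to derive the corollary directly from Proposition \ref{prop:AppIsom} and Komyo's Theorem \ref{thm:KomyoSymplectic}. First, I would invoke the commutative diagram (\ref{eq:AppLagrangian}) to factor the apparent map as $\App|_{\V} = \Pi|_{\W} \circ \Phi$, where $\Phi : \V \to \W$ is a biregular isomorphism. Since $\OMD$ is a smooth surface (total space of a line bundle over $\mathbb P^1$), Fogarty's theorem ensures that $\Hilb^{(n-3)}(\OMD)$ is smooth; hence so is its open subset $\W$, and therefore $\V$ inherits smoothness from $\Phi$.

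To identify the fibers, I would analyze $\Pi|_{\W}$ over an arbitrary $Q = q_1 + \cdots + q_{n-3} \in W$. Because $Q$ is reduced with support disjoint from $D$, any length-$(n-3)$ subscheme of $\OMD$ whose image in $\mathbb P^1$ equals $Q$ must split as a disjoint union of $n-3$ reduced points, one in each fiber $\pi^{-1}(q_i)$. Since each $\pi^{-1}(q_i)$ is the fiber over $q_i$ of the line bundle $\Omega^1(D)$, it is canonically isomorphic to $\mathbb C$. Hence the fiber of $\Pi|_{\W}$ over $Q$, and by $\Phi$-transport the fiber of $\App|_{\V}$ over $Q$, is canonically affine of the form $\mathbb C^{n-3}$.

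For the Lagrangian property, I would combine the fact (stated in the excerpt) that $\Pi$ is Lagrangian for the symplectic form on the open part of $\Hilb^{(n-3)}(\OMD)$ induced by the Liouville form on $\OM0$, with Komyo's theorem asserting that $\Phi$ is symplectic. Together these imply that the fibers of $\App|_{\V}$ are Lagrangian submanifolds of the open set $\V \subset \Con_{\Mu}^\Lambda$. Since $\V$ is Zariski dense in the irreducible variety $\Con_{\Mu}^\Lambda$ and the Lagrangian condition on a fiber of a meromorphic map is closed, this extends to the generic fibers on the full moduli space, so $\App$ defines a Lagrangian fibration in the usual sense.

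The substantive input is Komyo's symplectic identification (Theorem \ref{thm:KomyoSymplectic}); everything else is bookkeeping about the Hilbert scheme of points on a line bundle surface. I therefore do not expect any genuine obstacle: the only small technical point to verify carefully is the fiber description in the second paragraph, which amounts to the elementary remark that over the locus of reduced divisors disjoint from $D$, the Hilbert--Chow morphism composed with $\pi_*$ becomes an iterated product of fibers of $\pi$.
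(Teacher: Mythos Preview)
Your argument is correct and follows exactly the route the paper intends: the corollary is stated immediately after Proposition \ref{prop:AppIsom} with the remark ``Applying Komyo's result (Theorem \ref{thm:KomyoSymplectic}) we deduce'', and no further proof is given. You have simply spelled out the natural details---factoring $\App|_{\V}=\Pi|_{\W}\circ\Phi$, reading off the $\mathbb C^{n-3}$ fibers from the line-bundle structure of $\OMD$, and combining the Lagrangian property of $\Pi$ with the symplecticity of $\Phi$---so there is nothing to correct.
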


In fact, these $\C^{n-3}$-fibers play the role of {\it accessory parameters} in the classical theory.
In order to prove Proposition \ref{prop:AppIsom}, implying Theorem \ref{thm:BiratModel}, 
we explicitely construct the inverse application on $\Pi^{-1}(W)$. In Section \ref{Sec:PhiBirat}, we explicitely construct 
$\nabla_0$ as follows:

\begin{lem}\label{lem:UniversalNabla0}
Let $D$ and $\Lambda$ as before.
Given a divisor $Q\in W\subset \vert\mathcal O(n-3)\vert$ 
and given a lift
$$\{\mathbf p_1,\ldots,\mathbf p_{(n-3)}\}\in\Pi^{-1}(Q) \subset \mathrm{Hilb}^{(n-3)}(\OMD),$$ 
then there is a unique connection $\nabla_0$ on $E_0$ such that 
\begin{itemize}
\item $\nabla_0$ has polar divisor $D+Q$,
\item $\nabla_0$ satisfies the normalizations $\nabla_0\vert_{\mathcal O}\equiv0$ and $\varphi_{\nabla_0}=\mathrm{id}_{\Omega^1(D)}$ of Proposition \ref{prop:DefNabla0},
\item the local formal data of $\nabla_0$ at $D$ is $\Lambda$,
\item at each pole $q_j\in Q$, $\nabla_0$  is an apparent singular point with residual eigenvalues $-1$ and $0$,
and eigenspaces respectively given by $\Omega^1(D)\subset E_0$ and $\mathbf p_j$.
\end{itemize}
\end{lem}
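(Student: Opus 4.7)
The plan is to work in the Birkhoff--Grothendieck splitting $E_0=\O\oplus\Omega^1(D)$ and write $\nabla_0=d+A$ for a matrix $A$ of $1$-forms with poles on $D+Q$. The two normalization conditions of Proposition~\ref{prop:DefNabla0} pin down two entries of $A$: the $\O\to\O$ component vanishes, and the $\Omega^1(D)\to\O$ component equals the canonical inclusion $\Omega^1(D)\hookrightarrow\Omega^1(D+Q)$. The remaining entries are the ``diagonal'' $\omega_{22}\in H^0(\Omega^1(D+Q))$ and the ``off-diagonal'' $\omega_{21}$, a meromorphic section of the appropriate twist of $(\Omega^1)^{\otimes 2}$ with poles on $D+Q$.

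First I would uniquely determine $\omega_{22}$ by prescribing its Laurent expansion at each $t_i\in D$ (from the diagonal part of $\Lambda$) and its residue $-1$ at each $q_j\in Q$; these constraints are consistent thanks to the Fuchs relation, and a Riemann--Roch count on $\mathbb P^1$ gives uniqueness. Next I would determine $\omega_{21}$: its polar behaviour at $D$ is fixed by the remainder of $\Lambda$ (in particular the parabolic/eigendirection data at each $t_i$), while its residue $r_{21,j}$ at $q_j$ is prescribed by requiring the $0$-eigendirection of the residue matrix $\bigl(\begin{smallmatrix}0&0\\ r_{21,j}&-1\end{smallmatrix}\bigr)$, namely $(1,r_{21,j})$, to equal the prescribed point $\mathbf p_j\in\OMD|_{q_j}$. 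The $(-1)$-eigendirection then sits automatically in $\Omega^1(D)\subset E_0$, as required. Finally the apparent-singularity condition at $q_j$ suppresses the resonant logarithm arising from the integer difference $0-(-1)=1$ of the eigenvalues: it translates into the vanishing of a specific subleading coefficient of $\omega_{21}$ at $q_j$, fixing the one remaining local parameter.

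The main obstacle is the bookkeeping of these local constraints against the dimensions of the global spaces of meromorphic sections on $\mathbb P^1$: one must check that the total linear system is exactly determined, with the compatibilities coming from Fuchs-type global residue relations. Once this is verified, existence and uniqueness of $\nabla_0$ follow. Checking that the constructed connection is genuinely apparent at each $q_j$ reduces to a short local ODE computation in the normalized frame, showing that holomorphic solutions span the full $2$-dimensional solution space precisely because of the vanishing of the resonant coefficient.
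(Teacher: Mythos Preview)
Your plan is the paper's own argument, recast in invariant line-bundle language instead of the explicit partial-fraction decomposition carried out in Lemmas~\ref{lem:CiDi} and~\ref{Lem:tildeC}: fix $d_0=\omega_{22}$ from the trace part of $\Lambda$ and the residues $-1$ at $Q$, fix the principal parts of $c_0=\omega_{21}$ at $D$ from $\Lambda$ and its residues at $Q$ from the $\mathbf p_j$, then use the $n-3$ apparent conditions to pin down the remaining global polynomial piece (what the paper calls $\tilde C$). One small slip: in companion form the principal part of $\omega_{21}$ at each $t_i$ is read off from the \emph{determinant} invariant $[\det\Omega_0]^{<-n_i}_{x=t_i}$ (since $\det\Omega_0=-\omega_{12}\,\omega_{21}$ here), not from any parabolic or eigendirection data.
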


\subsection{Consequences}
Given $(E_0,\nabla_0)$ as in the Lemma, after a birational bundle modification,
we get rid of apparent singular points and obtain a class $(E,\nabla)$ of $\Con_{\Mu}^\Lambda$.
Moreover, this can be done in family 
$$\{\mathbf p_1,\ldots,\mathbf p_{(n-3)}\}\mapsto(E_0,\nabla_0)\rightsquigarrow(E,\nabla)$$
giving rise to a {\it universal family} on $\V\subset\Con_{\Mu}^\Lambda$.

We expect that this approach to the moduli space of connections will be useful to describe the total
moduli space of connections, as this has been done for instance in \cite{KomyoSaito} 
for the logarithmic case (with a complete description in the case $n=5$). 
In fact, we prove in Corollary \ref{cor:FixedLeveltDec} that the Hukuhara-Levelt-Turrittin splitting subsheaves 
for $\nabla_0$ at each pole of $D$ 
is determined by $\Lambda$ up to order equal to the multiplicity of the pole. We think that 
this decomposition can be used to blow-up the surface $\OMD\subset S_D=\P(E_0)$
and get an embedding of the larger open set 
$$\Con_{\Mu}^\Lambda\supset \U\hookrightarrow \mathrm{Hilb}^{n-3}(\widehat{\OMD})\subset \mathrm{Hilb}^{n-3}(\widehat{S_D}).$$
Next, to recover missing connections and embed the whole of $\Con_{\Mu}^\Lambda$,
we expect that we need to perform some birational flip to $\mathrm{Hilb}^{n-3}(\widehat{S_D})$
(see for instance \cite[Theorem1.4]{KomyoSaito}). It is important to understand the compactification
of $\Con_{\Mu}^\Lambda$, for instance for the Geometric Langlands Program (see \cite{ArinkinFedorov}).
Also we expect that our approch could be useful to better understand coalescence of poles in $D$ 
and degenerescence diagram for Garnier systems. 

\subsection{Thanks}
We would like to thank very much A. Komyo, M. H. Saito and S. Szab\'o for useful discussions on the subject.
We would like also to thank the referee who helped us to improve the presentation of our results.

\section{Local formal data}\label{sec:LocalFormalData}
In this section, we recall the local classification of meromorphic connections at poles up to formal gauge tranformations.
This is a very particular case of the theory developped by Hukuhara (1942), Turritin (1955) and Levelt (1975); we refer to 
\cite{Varadarajan} and \cite[Sections 15 \& 21]{IY} for more details and references therein.

 Consider a connection $(E,\nabla)$ with polar divisor $D=\sum_{i=1}^\nu n_i\cdot [t_i]$ on $\mathbb P^1$,
i.e. $t_i$ are distinct points and $n_i\in\mathbb Z_{>0}$ are multiplicities.
On any strict open set $U\subset \mathbb P^1$, the vector bundle $E$ is trivial,
i.e. admits global coordinates $Y\in\mathbb C^2$, and in such trivialization, the connection writes
$$\nabla:\ Y\mapsto dY+\Omega\cdot Y,$$
where $\Omega$ is the matrix-connection, a two-by-two matrix of meromorphic $1$-forms, sections of $\Omega^1(D)$.
In a coordinate $x:U\to \mathbb C$, we have 
$$\Omega=\begin{pmatrix}a(x) & b(x)\\ c(x) & d(x)\end{pmatrix}\frac{dx}{P(x)}$$
where $P=\prod_{i=1}^\nu(x-t_i)^{n_i}$ is the monic polynomial with divisor $\mathrm{div}(P)=D\vert_U$
and $a,b,c,d$ are holomorphic functions on $U$ (not all of them vanishing). 

Let $z$ be a local coordinate at a pole of $\nabla$, and $Y$ a local trivialization of $E$.
Then we can write $\nabla:\ Y\mapsto dY+\Omega\cdot Y$ with
\begin{equation}\label{Eq:LocalOmegaIrreg}
\Omega=\left(\underbrace{\frac{A_0}{z^{k+1}}+\frac{A_1}{z^k}+\cdots+\frac{A_k}{z}}_{\left[\Omega\right]^{<0}_{z=0}}+A(z)\right) dz
\end{equation}
where $n:=k+1$ is the order of pole, $A_0,A_1,\ldots,A_k\in\mathrm{gl}_n(\mathbb C)$ are constant matrices,
$A_0\not=0$, and $A\in\mathrm{gl}_n(\mathbb C\{z\})$ is holomorphic. We note $\left[\Omega\right]^{<0}_{z=0}$ the negative (or principal)
part of $\Omega$ at $z=0$. 

Given $M\in \mathrm{GL}_n(\mathbb C[[z]])$, the formal change of variable $Y=M\tilde Y$, that we usually call {\bf gauge transformation},
gives the new matrix connection 
$$\nabla:\tilde Y\mapsto d\tilde Y + \underbrace{\left( M^{-1}\Omega M+M^{-1}dM\right)}_{\tilde\Omega}\tilde Y.$$
We note that the new negative part only depends on the negative part of $\Omega$:
$$\left[\tilde\Omega\right]^{<0}_{z=0}=\left[M^{-1}\left[\Omega\right]^{<0}_{z=0} M\right]^{<0}_{z=0}$$
and is obtained just by conjugacy of the negative part of $\Omega$ by $M$  (since $M^{-1}dM$ is holomorphic).

\begin{remark}\label{rem:reduced}
We can also use formal meromorphic gauge transformations $M\in \mathrm{GL}_n(\mathbb C((z)))$.
Usually, this is used to obtain to decrease the pole order (when possible) of $\Omega$,
in order to obtain a good formal model.
In this paper, we do not consider meromorphic gauge transformation, and we will assume 
that the pole order is minimal up to such transformation. 
Another usual transformation is the twist (or tensor product) by a rank one holomorphic or meromorphic connection.
In local trivializations, this consists in adding a scalar matrix of meromorphic $1$-forms to $\Omega$.
For instance, when $A_0$ is not a scalar matrix, i.e. $A_0\not=c\cdot I$, then tensoring
by the rank $1$ connection $d-c\frac{dz}{z^{k+1}}$ would kill the matrix coefficient of $\frac{dz}{z^{k+1}}$.
Again, we will not use such transformations in this paper, and assume that $A_0$ is not scalar, so that 
the pole order cannot decrease by a meromorphic twist.
\end{remark}

\begin{definition}The connection $\nabla$ is said {\bf reduced} at $z=0$ if the pole order of $\nabla$ cannot decrease
by applying a meromorphic gauge transformation and/or twist by a meromorphic rank one connection, as in the previous remark.
\end{definition}

From now on, we will only consider connections with reduced poles, in particular assuming $A_0$ non scalar in (\ref{Eq:LocalOmegaIrreg}),
and we will only use (formal) holomorphic gauge transformations. In Proposition \ref{prop:ConsFormInv},
we will give an easy criterium for a pole to be reduced. 

\subsection{Logarithmic case $n=1$ (and $k=0$)} Clearly, the eigenvalues $\theta_1,\theta_2\in\mathbb C$ of $A_0$ 
are invariant under formal gauge transformation $M$ as above, and it is known (see \cite[Section 15]{IY}) that there exists  
a holomorphic $M$ such that 
\begin{equation}\label{eq:FormeNormLocLog}
\tilde\Omega=\begin{pmatrix}\theta_1&0\\ 0& \theta_2\end{pmatrix}\frac{dz}{z}\ \ \ \text{or}\ \ \ 
\begin{pmatrix}\theta+n& x^n\\ 0& \theta\end{pmatrix}\frac{dz}{z}
\end{equation}
in the resonant case $\{\theta_1,\theta_2\}=\{\theta,\theta+n\}$, with $n\in\mathbb Z_{\ge0}$.
In the construction of moduli spaces of connections, it is natural to fix the spectral data $\{\theta_1,\theta_2\}$
rather than the formal type (see \cite{IIS}). 

\subsection{Irregular $n>1$ and unramified case}\label{ss:irrun} Assume that the leading coefficient $A_0$ is semi-simple: it has distinct eigenvalues (recall $A_0$ cannot be scalar). 
\begin{prop}\label{prop:FormalUnram}
If $A_0$ has distinct eigenvalues in (\ref{Eq:LocalOmegaIrreg}), 
then there exists a formal $M$ such that
\begin{equation}\label{eq:FormeNormLocUnram}
\tilde\Omega=M^{-1}\Omega M+M^{-1}dM=\begin{pmatrix}\lambda^+&0\\ 0& \lambda^-\end{pmatrix}
\end{equation}
where
$$\lambda^{\pm}=\left(\frac{\theta_0^{\pm}}{z^{k+1}}+\frac{\theta_1^{\pm}}{z^{k}}+\cdots+\frac{\theta_k^{\pm}}{z}\right)dz.$$
Moreover, $M$ is unique up to right multiplication by a constant diagonal or anti-diagonal matrix.
\end{prop}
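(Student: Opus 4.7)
The plan is to follow the classical Hukuhara--Turrittin algorithm, adapted to rank two: first diagonalize the leading matrix, then eliminate off-diagonal entries one coefficient at a time, then kill the holomorphic part of the diagonal by a diagonal gauge change. Since $A_0$ has two distinct eigenvalues $\alpha\neq\beta$, a constant conjugation by an $M_0\in\mathrm{GL}_2(\mathbb{C})$ brings it to the form $\mathrm{diag}(\alpha,\beta)$. Writing the conjugated matrix as $A(z)=\sum_{j\geq -k-1}c_jz^j$, I would then apply, inductively for $\ell=1,2,3,\ldots$, a gauge transformation $M_\ell=I+z^\ell N_\ell$ with $N_\ell$ off-diagonal. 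A direct expansion gives
\[
M_\ell^{-1}AM_\ell+M_\ell^{-1}\tfrac{dM_\ell}{dz}=A+z^\ell[A,N_\ell]+\ell z^{\ell-1}N_\ell+O(z^{2\ell-k-1}),
\]
so the only change at order $z^{\ell-k-1}$ is $[c_{-k-1},N_\ell]$. Since $\mathrm{ad}(\mathrm{diag}(\alpha,\beta))$ is invertible on off-diagonal matrices, $N_\ell$ can be chosen uniquely to kill the off-diagonal part of the coefficient at that order. The contribution $\ell z^{\ell-1}N_\ell$ lies at order $\ell-1$, which is strictly greater than $\ell-k-1$ because $k\geq 1$, and higher-order corrections are at orders $\geq 2\ell-k-1>\ell-k-1$, so neither disturbs the position just corrected nor any previously treated. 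The ordered formal product $M_0\cdot\prod_{\ell\geq 1}M_\ell$ converges in $\mathrm{GL}_2(\mathbb{C}[[z]])$ and diagonalizes the connection.

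Once the connection is diagonal, of the form $\mathrm{diag}(\mu^+,\mu^-)\,dz$ with $\mu^\pm\in z^{-k-1}\mathbb{C}[[z]]$, a further diagonal gauge transformation $\mathrm{diag}(e^{h^+},e^{h^-})$ with $h^\pm\in\mathbb{C}[[z]]$ adds $(h^\pm)'$ to $\mu^\pm$; choosing $h^\pm$ to be a formal antiderivative of the negative of the holomorphic part of $\mu^\pm$ produces the announced normal form $\tilde\Omega=\mathrm{diag}(\lambda^+,\lambda^-)$. For uniqueness, any formal gauge $M$ preserving $\tilde\Omega$ satisfies $dM=[M,\tilde\Omega]$, which componentwise reads $dm_{11}=dm_{22}=0$ and
\[
dm_{12}=(\lambda^--\lambda^+)m_{12},\qquad dm_{21}=(\lambda^+-\lambda^-)m_{21}.
\]
Since $\lambda^+-\lambda^-$ has a pole of order exactly $k+1\geq 2$ with nonzero leading coefficient $\alpha-\beta$, comparing orders of the lowest nonzero coefficients forces $m_{12}=m_{21}=0$ in $\mathbb{C}[[z]]$; hence $M$ is constant and diagonal. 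The residual freedom of swapping the two eigenvalues of $A_0$ in the very first step is realized by right multiplication by an anti-diagonal constant, which conjugates $\tilde\Omega$ by swapping $\lambda^+\leftrightarrow\lambda^-$, giving the full stated ambiguity.

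The most delicate point will be the bookkeeping in the iterative step, verifying that every correction introduced by $M_\ell$---both the $dM_\ell$ term and the higher-order conjugation terms---lands at orders strictly beyond the position currently being treated. This is what allows the inductive construction to proceed without feedback, and it relies essentially on $k\geq 1$; the logarithmic case $k=0$ is genuinely different and gives rise to the alternative normal forms listed in \eqref{eq:FormeNormLocLog}.
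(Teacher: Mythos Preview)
Your proof is correct and follows the standard Hukuhara--Turrittin scheme: kill the off-diagonal coefficients one order at a time with shearing gauges $I+z^\ell N_\ell$, then absorb the holomorphic diagonal remainder by an exponential diagonal gauge, and finally compute the stabilizer of the normal form for uniqueness. The order bookkeeping is accurate (the key inequalities $\ell-1>\ell-k-1$ and $2\ell-k-1>\ell-k-1$ both use $k\ge1$ and $\ell\ge1$ respectively), and the uniqueness argument via $dm_{ij}=(\lambda^{\mp}-\lambda^{\pm})m_{ij}$ is clean.

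The paper organizes the diagonalization differently. Rather than correcting one coefficient at a time, it solves the full characteristic equation $\det(\Omega-\lambda I)=0$ for the two formal meromorphic series $\lambda^\pm$ and writes down the explicit eigenvector matrix
\[
M=\begin{pmatrix}1&\dfrac{\beta}{\lambda^--\alpha}\\[1.5ex] \dfrac{\gamma}{\lambda^+-\delta}&1\end{pmatrix},
\]
which diagonalizes $\Omega$ \emph{exactly as a matrix}; the only remaining off-diagonal contribution is the holomorphic $M^{-1}dM$. Each pass of this step therefore gains about $k+1$ orders of diagonality at once rather than one, so the paper's iteration is coarser but each step does more. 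Both routes are classical; yours is the textbook presentation, while the paper's makes the interpretation of $\lambda^\pm$ as formal eigenvalues of $\Omega$ (used just after the proposition) more immediately visible. The final diagonal reduction and the uniqueness argument are essentially the same in both.
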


We note that the formal invariants $\lambda^{\pm}$ can be determined from the negative part $\left[\Omega\right]^{<0}_{z=0}$
of the initial matrix: it suffices to solve 
$$\det\bigg([\Omega]^{<0}_{z=0}-\lambda I\bigg)=\lambda^2-\mathrm{tr}\bigg([\Omega]^{<0}_{z=0}\bigg)\lambda+\det\bigg([\Omega]^{<0}_{z=0}\bigg)=0$$
up to the $k+1$ first coefficients.

An easy consequence of this normal form is that the connection admits a unique formal decomposition 
as product of rank $1$ connections. Indeed, one can write $E=L^+\oplus L^-$ with formal line bundles 
$L^{\pm}\subset E$ that are invariant by $\nabla$, and $\nabla\vert_{L^{\pm}}=d+\lambda^{\pm}$.
This is the Hukuhara-Levelt-Turrittin splitting in its simplest form (see Varadarajan \cite{Varadarajan}).

\begin{proof} 
We first diagonalize the connection as follows.
Assume first 
$$\Omega=\begin{pmatrix}\alpha&\beta\\ \gamma&\delta\end{pmatrix}=\left(\frac{A_0}{z^{k+1}}+\cdots\right)dz\ \ \ 
\text{with}\ \ \ 
A_0=\begin{pmatrix}\theta_0^+&0\\0&\theta_0^-\end{pmatrix}$$ 
by a preliminary change by a constant matrix $M$. 
Then we start solving $\det(\Omega-\lambda I)=0$
and because $\theta_0^{\pm}$ are distinct, then there are exactly two meromorphic solutions
$$\lambda^{\pm}=\left(\frac{\theta_0^{\pm}}{z^{k+1}}+\cdots\right)dz.$$
Then, the invertible holomorphic matrix 
$$M=\begin{pmatrix}1&\frac{\beta}{\lambda^--\alpha}\\ \frac{\gamma}{\lambda^+-\delta}&1\end{pmatrix}$$
satisfies $M^{-1}\Omega M=\diag(\lambda^+,\lambda^-)$. As $M^{-1}dM$ is holomorphic, then the gauge transformation
$\tilde\Omega=M^{-1}\Omega M+M^{-1}dM$ has diagonal principal part. If we now start with $\Omega$ 
diagonal up to some positive order $m$, i.e. antidiagonal terms up to $z^m$ vanish, then $M$ will be tangent to the 
identity up to order $m+k+1$, and $\tilde\Omega$ will be diagonal up to order $m+k+1$. By iterating the above process,
we find a change $M=M_1M_2M_3\cdots$ which converges in the formal holomorphic setting, and the resulting gauge
transformation diagonalizes the connection. We note that eigenvalues $\lambda^{\pm}$ of $\Omega$ change at each step,
but their principal part does not. Next, by a diagonal holomorphic change $M$, which commute with the diagonal connection 
$\Omega=M^{-1}\Omega M$, we get $\tilde\Omega=M^{-1}dM$, and it is easy to find such a $M$ killing the positive part 
of $\Omega$. This latter transformation is unique up to a constant diagonal matrix. 
One easily check that the total transformation $M$ is also well-defined in the formal holomorphic setting.
The unique invariant line bundles $L^{\pm}$ are generated then by the standart basis. 
\end{proof}


\subsection{Irregular $n>1$ and ramified case}\label{ss:irram}
We now assume that the leading coefficient $A_0$ is a non trivial Jordan block: it has a single eigenvalue with multiplicity $2$ 
but is not scalar, i.e. we can assume
after a preliminary constant gauge transformation $M$ that
\begin{equation}\label{eq:CondRamif}
\Omega=\left(\frac{\begin{pmatrix}a_0&b_0\\0&a_0\end{pmatrix}}{z^{k+1}}+\frac{\begin{pmatrix}a_1&b_1\\c_1&d_1\end{pmatrix}}{z^k}+\cdots\right)dz\ \ \ \text{with}\ b_0\not=0.
\end{equation}
If $c_1=0$ in the form above,
i.e. the $(2,1)$-coefficient of $A_1$ also vanish, then the birational gauge transformation 
$M=\diag(1,z)$ transforms $A_0$ into a scalar matrix $\diag(a_0,a_0)$, so that one can decrease the pole order 
by twisting with the rank one connection $d-a_0\frac{dz}{z^{k+1}}$. It is therefore natural to assume that 
$c_1\not=0$, and it is possible to check that the pole order cannot decrease anymore by gauge or twist. 

\begin{prop}\label{prop:LocalNormalRamified}
If the connection takes the form (\ref{eq:CondRamif}) with $c_1\not=0$, then 
 there exists a formal $M$ such that 
 \begin{equation}\label{eq:FormeNormLocRam}
\tilde\Omega=M^{-1}\Omega M+M^{-1}dM=\begin{pmatrix}\alpha&\beta \\ z\beta & \alpha-\frac{dz}{2z}\end{pmatrix}
\end{equation}
where
$$\left\{\begin{matrix}
\alpha=&\left(\frac{a_0}{z^{k+1}}+\frac{a_1}{z^{k}}+\cdots\cdots\cdots+\frac{a_k}{z}\right)dz \\
\beta=&\left(\frac{b_0}{z^{k+1}}+\frac{b_1}{z^{k}}+\cdots+\frac{b_{k-1}}{z^2}\right)dz\hfill\hfill
\end{matrix}\right. $$
Moreover, $M$ is unique up to composition by matrices of the form
$$\begin{pmatrix}a&cz\\ c&a\end{pmatrix}\ \ \ \text{or}\ \ \ \begin{pmatrix}a&cz\\ -c&-a\end{pmatrix},\ \ \ a,c\in\mathbb C;$$
the first type preseves the normal form, while the second one changes the sign $\beta\mapsto -\beta$.
\end{prop}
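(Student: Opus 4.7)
The plan is to reduce to the unramified setting of Proposition \ref{prop:FormalUnram} by passing to the ramified double cover $\pi : w \mapsto z = w^2$, diagonalizing on the cover, and then descending via Galois invariance. The hypothesis $c_1 \neq 0$ is exactly what is needed: the discriminant of the characteristic polynomial of $\Omega$, with leading quadratic differential $4 b_0 c_1\, dz^2/z^{2k+1}$, does not admit a formal square root over $\mathbb{C}((z))$ (the order is odd), but on the cover it reads $16 b_0 c_1\, dw^2/w^{4k}$, which is now a square and produces two distinct formal eigenvalues.

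First I would pull back $\Omega$ to the cover. The leading coefficient of $\pi^*\Omega$ is still a scaled nilpotent Jordan block, so Proposition \ref{prop:FormalUnram} does not apply verbatim; I would therefore extract the scalar part of the leading coefficient as a formal rank-one twist and then conjugate by $\mathrm{diag}(1,w)$, which mixes the entries $b_0$ and $c_1$ into a single effective principal part $\bigl(\begin{smallmatrix} * & 2b_0 \\ 2c_1 & *\end{smallmatrix}\bigr)$ whose discriminant $16 b_0 c_1$ is nonzero. The proof of Proposition \ref{prop:FormalUnram} (or its statement applied after the preparation) then gives a formal gauge transformation $N$ on the cover with $N^{-1}(\pi^*\Omega)N + N^{-1}dN = \mathrm{diag}(\lambda^+, \lambda^-)$. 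Galois invariance of $\pi^*\Omega$ under $\sigma : w \mapsto -w$, together with the non-existence of a $\sigma$-invariant splitting on the base, forces $\sigma$ to swap the two eigenlines, yielding $L^-(w) = -L^+(-w)$ where $\lambda^\pm = L^\pm(w)\,dw$ (the sign arises from $\sigma^*dw = -dw$).

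For the descent I would use the $\sigma$-invariant basis $f_1 = e_+ + e_-$, $f_2 = w(e_+ - e_-)$, which descends to a formal trivialization on the base. With $P = \bigl(\begin{smallmatrix}1 & w \\ 1 & -w\end{smallmatrix}\bigr)$, a direct computation of $P^{-1}\mathrm{diag}(\lambda^+, \lambda^-)P + P^{-1}dP$ yields a matrix of $1$-forms whose entries all descend to the base; the key identity is $P^{-1}dP = \mathrm{diag}(0, dw/w) = \mathrm{diag}(0, dz/(2z))$, which is the source of the logarithmic correction on the diagonal. After a final swap $f_1 \leftrightarrow f_2$ to match conventions, one obtains precisely the claimed form $\bigl(\begin{smallmatrix} \alpha & \beta \\ z\beta & \alpha - dz/(2z)\end{smallmatrix}\bigr)$. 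The relation $(2,1) = z\cdot(1,2)$ and the pole orders of $\alpha$ and $\beta$ then follow from tracking the even/odd decomposition $L^+ = E(w) + O(w)$ (with $E$ even giving the ``off-ramified'' entries and $O$ odd giving the descended diagonal) and from $\mathrm{ord}_w(\pi^*\Omega) = 2k+1$.

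Finally, for uniqueness, any two formal gauges producing the normal form differ by an element $K$ that either stabilizes the normal form or realizes the symmetry $\beta \mapsto -\beta$. Lifted through $P$ to the cover, $K$ must preserve or swap the two factors of the diagonal form, hence by Proposition \ref{prop:FormalUnram} must come from a constant diagonal or anti-diagonal matrix. Pushing these constants back down through $P$ and imposing $\sigma$-equivariance produces the two explicit families in the statement, where the appearance of $z$ in the $(1,2)$-slot is another incarnation of the ramified change of basis. The main technical obstacle throughout is the careful Galois bookkeeping in the descent step: tracking how $\sigma$ acts simultaneously on the basis vectors and on the $1$-form $dw$ is what produces both the $dz/(2z)$ correction and the specific shape of the stabilizer.
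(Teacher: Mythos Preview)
Your approach is essentially the same as the paper's: pass to the ramified cover $z=\zeta^2$, prepare by $\mathrm{diag}(1,\zeta)$ and a scalar twist, apply the unramified diagonalization, and descend by Galois. The only stylistic difference is in the descent step: the paper observes that the $\sigma$-action swaps $L^+$ and $L^-$, hence produces a single ramified invariant line bundle on the base, and then characterizes the normal form as the one in which this line bundle is spanned by $\begin{pmatrix}1\\ \sqrt{z}\end{pmatrix}$; you instead write down the $\sigma$-invariant frame $f_1=e_++e_-$, $f_2=w(e_+-e_-)$ and conjugate explicitly through $P=\bigl(\begin{smallmatrix}1&w\\1&-w\end{smallmatrix}\bigr)$. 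These are the same computation viewed from two sides (your $P$ is exactly the matrix the paper writes just after the statement to diagonalize the normal form on the cover), and your uniqueness argument via lifting to constant (anti-)diagonal matrices on the cover matches the paper's remark that the stabilizer consists of matrices preserving the ramified line bundle.
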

We have one less formal invariant, namely $2k+1$ instead of $2k+2$.
After ramification $z=\zeta^2$, and applying the birational gauge transformation 
$$\tilde Y=\begin{pmatrix}1&1\\ \zeta&-\zeta\end{pmatrix}\bar Y,$$ 
we get an unramified irregular singular point with matrix connection 
$$\bar\Omega=\begin{pmatrix}\lambda^+&0 \\ 0 & \lambda^-\end{pmatrix}$$
where
$$\lambda^\pm=\alpha\pm \zeta\beta.$$
Therefore, again we have a unique formal decomposition $E=L^+\oplus L^-$,
but in a ramified variable $\zeta=\sqrt{z}$, i.e. $L^\pm$ is generated by the 
``section'' 
$$\zeta\mapsto\begin{pmatrix}1\\ \pm\zeta\end{pmatrix}.$$

\begin{proof}[Proof of Proposition \ref{prop:LocalNormalRamified}]
Applying the base change $z=\zeta^2$ in (\ref{eq:CondRamif}), and the elementary transformation 
$$Y=\begin{pmatrix}1&0\\ 0&\zeta\end{pmatrix}\bar Y$$ 
to the connection $d+\Omega$ yields 
$$\bar\Omega=2\left(\frac{\begin{pmatrix}a_0&0\\0&a_0\end{pmatrix}}{\zeta^{2k+1}}+\frac{\begin{pmatrix}0&b_0\\c_1&0\end{pmatrix}}{\zeta^{2k}}+\cdots\right)d\zeta\ \ \ \text{with}\ b_0\not=0.
$$
We can delete the dominant term, which is scalar, by a twist and get back to the irregular unramified case.
We observe that the lifted connection is invariant under the combination of $\zeta\mapsto-\zeta$ with the 
gauge transformation $M=\diag(1,-1)$. This transformation permutes the two formal invariant line bundles
$L^+$ and $L^-$ of section \ref{ss:irrun}, so that we get a ramified invariant line bundle  for $d+\Omega$
generated by a multisection of the form 
$$z\mapsto\begin{pmatrix}1\\ s(\sqrt{z})\end{pmatrix}\ \ \ \text{with}\ \ \ s(\zeta)=\sqrt{b_0c_1}\zeta+o(\zeta^2)\in\mathbb C[[\zeta]].$$
After a formal gauge transformation $Y=M\tilde Y$, we can normalize this ramified bundle to that one generated by
$$z\mapsto\begin{pmatrix}1\\ \sqrt{z}\end{pmatrix}$$
and it is easy to check that the resulting matrix connection $\tilde\Omega$ is in the normal form of the statement.
The last assertion follows from the shape of matrices preserving the above ramified line bundle.
\end{proof}

\subsection{Local formal data}\label{ss:LocalConsequences}
We can rephrase the formal classification in the irregular case as follows. Consider the equation 
\begin{equation}
\det(\Omega-\lambda I)=\lambda^2-\mathrm{tr}(\Omega)\lambda+\det(\Omega)=0\ \ \ \text{where}\ 
\Omega=\begin{pmatrix}\alpha&\beta\\ \gamma&\delta\end{pmatrix}
\end{equation}
and its discriminant:
\begin{equation}
\Delta(\Omega)=\mathrm{tr}(\Omega)^2-4\det(\Omega)=(\alpha-\delta)^2+4\beta\gamma.
\end{equation}
($\det(\Omega)$ and $\Delta(\Omega)$ are meromorphic quadratic differentials). If $n$ is the order of pole of $\Omega$
at $z=0$, then the first $n$ coefficients of the negative part
$$[\mathrm{tr}(\Omega)]^{<0}_{z=0},\ \ \ [\det(\Omega)]^{<-n}_{z=0},\ \ \ \text{and}\ \ \ [\Delta(\Omega)]^{<-n}_{z=0}$$
only depend on the negative part $\left[\Omega\right]^{<0}_{z=0}$ of $\Omega$, up to formal holomorphic gauge transformation.

\begin{prop}\label{prop:ConsFormInv}
The pole order $n>1$ of the connection $d+\Omega$ is reduced  if, and only if, 
the pole order of $\Delta(\Omega)$ is $2n$ or $2n-1$; moreover, these two possible orders respectively correspond 
to the unramified and ramified cases. The negative part of $\mathrm{tr}(\Omega)$ and $\det(\Omega)$ provide 
a full set of invariants for the formal classification, i.e. up to formal holomorphic gauge transformation:
$$\Omega'=M^{-1}\Omega M+M^{-1}dM\ \ \ \Leftrightarrow\ \ \ \left\{\begin{matrix}
[\mathrm{tr}(\Omega')]^{<0}_{z=0}=[\mathrm{tr}(\Omega)]^{<0}_{z=0}\\
[\det(\Omega')]^{<-n}_{z=0}=[\det(\Omega)]^{<-n}_{z=0}
\end{matrix}\right.$$
\end{prop}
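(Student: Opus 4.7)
My plan is to proceed in three steps: (i) prove gauge-invariance of $[\mathrm{tr}(\Omega)]^{<0}$ and $[\det(\Omega)]^{<-n}$; (ii) establish the reducedness criterion by computing the top two coefficients of $\Delta(\Omega)$; (iii) prove completeness of the invariants by reducing to the normal forms of Propositions \ref{prop:FormalUnram} and \ref{prop:LocalNormalRamified}.

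For step (i), I decompose $\tilde\Omega = A + B$ with $A := M^{-1}\Omega M$ and $B := M^{-1}dM$. Since $\Omega$ has the form $X(z)\,dz$ with $X$ a matrix of scalar Laurent series, matrix multiplication is purely algebraic and gives $\mathrm{tr}(A) = \mathrm{tr}(\Omega)$, $\det(A) = \det(\Omega)$. For $M \in \GL(\C[[z]])$, $B$ is a matrix of holomorphic $1$-forms with $\mathrm{tr}(B) = d\log\det M$; hence $\mathrm{tr}(\tilde\Omega) - \mathrm{tr}(\Omega)$ is holomorphic and $[\mathrm{tr}(\Omega)]^{<0}$ is invariant. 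Expanding $\det(A+B) = \det(A) + \det(B) + (A_{11}B_{22} + A_{22}B_{11} - A_{12}B_{21} - A_{21}B_{12})$ and using that the entries of $A$ have pole order $\le n$ while those of $B$ are holomorphic, the difference $\det(\tilde\Omega) - \det(\Omega)$ has pole order $\le n$ as a quadratic differential, so $[\det(\Omega)]^{<-n}$ and $[\Delta(\Omega)]^{<-n}$ are invariant as well.

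For step (ii), I compute the two top coefficients of $\Delta(\Omega) = (\alpha_{11}-\alpha_{22})^2 + 4\alpha_{12}\alpha_{21}$ from $A_0$ and $A_1$ via a short case analysis. If $A_0$ has distinct eigenvalues $\theta_0^\pm$, then after a preliminary constant gauge diagonalizing $A_0$ the leading coefficient of $\Delta$ at $z^{-2n}$ equals $(\theta_0^+ - \theta_0^-)^2 \ne 0$, so the pole order of $\Delta$ is exactly $2n$. If $A_0$ is a non-scalar Jordan block as in (\ref{eq:CondRamif}), the $z^{-2n}$-term cancels and the coefficient at $z^{-(2n-1)}$ equals $4 b_0 c_1$, giving pole order $2n-1$ exactly when $c_1 \ne 0$. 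Finally, if $A_0$ is scalar, both top coefficients vanish and the pole order of $\Delta$ is at most $2n-2$. By Remark \ref{rem:reduced} the last two cases are precisely the non-reduced ones, and since the pole-order thresholds are gauge-invariant by step (i), this yields the claimed equivalence.

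For step (iii), I use the normal forms directly. In the unramified case, the formal invariant $\{\lambda^+,\lambda^-\}$ is recovered by solving $\lambda^2 - \mathrm{tr}(\Omega)\lambda + \det(\Omega) = 0$: the even pole order $2n$ and nonzero leading coefficient of $\Delta$ guarantee the existence of a formal square root $\sqrt{\Delta}$, and its first $n$ negative Laurent coefficients are determined by $[\Delta]^{<-n}$, hence by $[\mathrm{tr}]^{<0}$ and $[\det]^{<-n}$. In the ramified case, $[\mathrm{tr}(\Omega)]^{<0}$ determines the $n$ negative coefficients of $\alpha$ via $\mathrm{tr}(\Omega) \equiv 2\alpha - \frac{dz}{2z}$ modulo holomorphic terms, and then $-z\beta^2 \equiv \det(\Omega) - \alpha^2 + \alpha\, \frac{dz}{2z}$ modulo pole order $\le n$ determines $\beta$ up to sign, matching precisely the $\beta \mapsto -\beta$ ambiguity of Proposition \ref{prop:LocalNormalRamified}. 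The main technical obstacle lies in the case analysis of step (ii): one must verify that the leading-coefficient computations, although performed in a chosen basis, yield pole-order statements that match the gauge-invariant quantities from step (i), and in particular that the scalar and Jordan-with-$c_1=0$ subcases really produce pole order $\le 2n-2$ rather than $\le 2n-1$. Once this is done, steps (i) and (iii) amount to bookkeeping with the explicit normal forms already in hand.
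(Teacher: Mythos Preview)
Your proof is correct and follows essentially the same approach as the paper: both reduce the completeness claim to the normal forms of Propositions~\ref{prop:FormalUnram} and~\ref{prop:LocalNormalRamified}, recovering $\lambda^\pm$ (resp.\ $\alpha,\pm\beta$) from $[\tr(\Omega)]^{<0}$ and $[\det(\Omega)]^{<-n}$ via the same formulas. The paper's proof is very brief and simply refers back to ``the discussions of the previous subsections'' for the reducedness criterion and for gauge invariance of the truncated trace and determinant; your steps (i) and (ii) make these points explicit, which is a genuine improvement in exposition. One small remark: in step~(ii) you invoke Remark~\ref{rem:reduced} to conclude that the scalar and Jordan-with-$c_1=0$ cases are \emph{precisely} the non-reduced ones, but that remark (and the beginning of \S\ref{ss:irram}) only gives one implication; the converse---that the unramified and ramified normal forms cannot be further reduced by meromorphic gauge and twist---is asserted without proof in the paper as well (``it is possible to check''), so you are at the same level of rigor.
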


\begin{proof}
This can be easily deduced from the discussions of the previous subsections. In the unramified case,
the formal invariants $\lambda^{\pm}$ of Proposition \ref{prop:FormalUnram} are given by the negative 
part $[\lambda]^{<0}_{z=0}$
of the two solutions of 
$$\lambda^2-\big([\mathrm{tr}(\Omega)]^{<0}_{z=0}\big)\lambda+\big([\det(\Omega)]^{<-n}_{z=0}\big)=0.$$
In the ramified case, the formal invariants $\alpha$ and $\pm\beta$ of Proposition \ref{eq:FormeNormLocRam}
are given by first solving 
$$2\alpha-\frac{dz}{2z}=[\mathrm{tr}(\Omega)]^{<0}_{z=0}\ \ \ \text{and}\ \ \ 
\alpha^2-\alpha\frac{dz}{2z}-z\beta^2=[\det(\Omega)]^{<-n}_{z=0}$$
and then take the negative part of the formal meromorphic solutions $\alpha$ and $\pm\beta$.
\end{proof}

We therefore conclude that the equivalence class of the connection $d+\Omega$ up to formal 
holomorphic gauge transformation is characterized in the irregular case by the negative parts
$[\mathrm{tr}(\Omega)]^{<0}_{z=0}$ and $[\det(\Omega)]^{<-n}_{z=0}$ of usual invariants.
In the logarithmic case, it correspond to the usual spectral data of the singular point
(but fails however to characterize up formal gauge transformation in the resonant case).

\begin{prop}\label{prop:FormalInvCoordChange}
The negative parts of linear or quadratic differentials at $t_0=\{z=0\}$
$$\tr(\nabla)^{<0}_{t_0}:=[\mathrm{tr}(\Omega)]^{<0}_{z=0}\ \ \ \text{and}\ \ \ 
\det(\nabla)^{<-n}_{t_0}:=[\det(\Omega)]^{<-n}_{z=0}$$
are well defined by $(E,\nabla)$ as section germs at $t_0$ of $\Omega^1(D)/\Omega^1$ 
and $(\Omega^1(D))^{\otimes 2}/(\Omega^1)^{\otimes 2}(D)$ respectively, 
independantly of the choice of the coordinate $z$. We denote this pair of invariants
by 
$$\Lambda_{t_0}(\nabla)=\left( \tr(\nabla)^{<0}_{t_0}\ ,\ \det(\nabla)^{<-n}_{t_0} \right)$$
\end{prop}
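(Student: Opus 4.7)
The plan is to separate the statement into two parts: independence from the local trivialization of $E$ (gauge invariance) and independence from the local coordinate $z$. The coordinate independence is essentially tautological once one observes that $\mathrm{tr}(\Omega)$ is intrinsically a section of $\Omega^1(D)$ and $\det(\Omega)$ is intrinsically a section of $(\Omega^1(D))^{\otimes 2}$ (the entries of $\Omega$ are sections of $\Omega^1(D)$ and products are tensor products). Passing to the quotients $\Omega^1(D)_{t_0}/\Omega^1_{t_0}$ and $(\Omega^1(D))^{\otimes 2}_{t_0}/(\Omega^1)^{\otimes 2}(D)_{t_0}$ extracts precisely the principal parts $[\mathrm{tr}(\Omega)]^{<0}_{z=0}$ and $[\det(\Omega)]^{<-n}_{z=0}$, by a direct degree count using $\Omega^1(D) = \Omega^1(n\cdot[t_0])$ locally.

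So the real content is gauge invariance, and in fact Proposition \ref{prop:ConsFormInv} already furnishes this at the formal level (and holomorphic gauge is a special case of formal gauge). For the reader's comfort I would however give the direct verification. For the trace, under a holomorphic change $Y = M\tilde Y$ one has
$$\mathrm{tr}(\tilde\Omega) = \mathrm{tr}(M^{-1}\Omega M) + \mathrm{tr}(M^{-1}dM) = \mathrm{tr}(\Omega) + d\log\det(M),$$
and since $\det(M)\in\O^*_{t_0}$ the correction $d\log\det(M)$ is holomorphic, so the class of $\mathrm{tr}(\Omega)$ in $\Omega^1(D)_{t_0}/\Omega^1_{t_0}$ is unchanged. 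This also matches the intrinsic description: $\mathrm{tr}(\Omega)$ is the connection matrix of the induced connection on $\det(E)$, and the ambiguity in trivializing $\det(E)$ is exactly the addition of such a logarithmic derivative.

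For the determinant, I would exploit the $2\times 2$ identity
$$\det(X+Y) \;=\; \det(X) + \det(Y) + \mathrm{tr}(X)\mathrm{tr}(Y) - \mathrm{tr}(XY),$$
applied to $X = M^{-1}\Omega M$ and $Y = M^{-1}dM$. Since $\det(M^{-1}\Omega M) = \det(\Omega)$, this gives
$$\det(\tilde\Omega) - \det(\Omega) \;=\; \det(M^{-1}dM) \;+\; \mathrm{tr}(\Omega)\cdot\mathrm{tr}(M^{-1}dM) \;-\; \mathrm{tr}(M^{-1}\Omega\, dM).$$
Now $\det(M^{-1}dM)$ and $\mathrm{tr}(M^{-1}dM)$ are holomorphic (as differentials in $M$ composed with $M^{-1}$), so the second summand has pole at most equal to the order $n$ of $\mathrm{tr}(\Omega)$, and $M^{-1}\Omega\,dM$ has entries with pole at most $n$ since $\Omega$ does and $M^{-1}dM$ is holomorphic. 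Consequently all three corrections lie in $(\Omega^1)^{\otimes 2}(D)_{t_0}$, so the class of $\det(\Omega)$ in $(\Omega^1(D))^{\otimes 2}_{t_0}/(\Omega^1)^{\otimes 2}(D)_{t_0}$ is preserved.

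There is no genuinely hard step; the only thing to be careful with is the pole-order bookkeeping that places the correction terms in $(\Omega^1)^{\otimes 2}(D)$ rather than in the larger sheaf $(\Omega^1(D))^{\otimes 2}$. Given Proposition \ref{prop:ConsFormInv}, the present proposition is essentially a repackaging of formal gauge invariance as a statement about well-defined germs of sections of intrinsic quotient sheaves, which is why no separate treatment of coordinate changes is required.
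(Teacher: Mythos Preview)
Your proof is correct. The paper's own proof is much shorter: since gauge invariance was already the content of the preceding Proposition~\ref{prop:ConsFormInv}, it addresses only the coordinate change, writing $\Omega'=\varphi^*\Omega$ under $z'=\varphi(z)$ and observing that $[\mathrm{tr}(\Omega')]^{<0}_{z'=0}=\bigl[\varphi^*[\mathrm{tr}(\Omega)]^{<0}_{z=0}\bigr]^{<0}_{z'=0}$ (and similarly for the determinant), which is exactly the statement that the negative part is a well-defined germ in the quotient sheaf.

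Your treatment inverts the emphasis: you dispatch coordinate independence as tautological from the sheaf-theoretic viewpoint and then give an explicit, self-contained verification of gauge invariance via $\mathrm{tr}(M^{-1}dM)=d\log\det M$ and the $2\times 2$ determinant identity. This is redundant given Proposition~\ref{prop:ConsFormInv}, as you yourself note, but it has the virtue of making the pole-order bookkeeping for $\det(\Omega)$ fully explicit---in particular, why the correction terms land in $(\Omega^1)^{\otimes 2}(D)$ rather than merely in $(\Omega^1(D))^{\otimes 2}$. Both approaches amount to the same content.
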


\begin{proof}If we apply a change of coordinate $z'=\varphi(z)$, then the matrix connection is changed as follows
$$\Omega'=\varphi^*\Omega$$
where $\varphi^*$ is the pull-back on differential forms applied coefficient-wise. Therefore,
the trace part is changed as follows:
$$[\mathrm{tr}(\Omega')]^{<0}_{z'=0}=[\varphi^*\mathrm{tr}(\Omega)]^{<0}_{z'=0}
=\left[\varphi^*[\mathrm{tr}(\Omega)]^{<0}_{z=0}\right]^{<0}_{z'=0}.$$
We have a similar formula for the determinant.
\end{proof}

\begin{definition}We call formal data of $(E,\nabla)$, and denote by $\Lambda$, the collection of 
formal data $\Lambda_{t_0}$ at each pole $t_0$ of $\nabla$.
\end{definition}

We recall the Fuchs relation which follows from residue formula:

\begin{prop}\label{prop:Fuchs}
If $(E,\nabla)$ is a rank 2 meromorphic connection on a (smooth irreducible) curve $C$,
with poles $t_1,\ldots,t_\nu$ and formal data $\Lambda$, then we have 
\begin{equation}\label{eq:Fuchs}
\sum_{i=1}^\nu\mathrm{Res}_{t_i}\left(\tr(\nabla)^{<0}_{t_i}\right)\ +\ \det(E)=0.
\end{equation}
\end{prop}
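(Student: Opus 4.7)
The plan is to reduce the statement to a rank one residue identity by passing to the determinant connection. Concretely, $\det(\nabla)$ is a meromorphic connection on the line bundle $\det(E)$, and in any local trivialization where $\nabla = d + \Omega$ we have $\det(\nabla) = d + \mathrm{tr}(\Omega)$. Its negative part at each pole $t_i$ is therefore precisely $\tr(\nabla)^{<0}_{t_i}$, which is intrinsically defined by Proposition \ref{prop:FormalInvCoordChange}, and $\deg(\det(E)) = \deg(E)$. So (\ref{eq:Fuchs}) is equivalent to the statement that, for any rank one meromorphic connection $(L,\nabla_L)$ on a compact smooth irreducible curve $C$, the sum of residues of the connection form at its poles equals $-\deg(L)$.

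To establish this rank one identity, I would choose any nonzero meromorphic section $s$ of $L$, which exists because $C$ is a compact Riemann surface. The relation $\nabla_L(s) = \eta \otimes s$ then defines a global meromorphic $1$-form $\eta$ on $C$. The classical residue theorem gives $\sum_{p \in C} \mathrm{Res}_p(\eta) = 0$, and I would evaluate this sum pointwise. At a point $p$ where $s$ has order $m_p \in \mathbb{Z}$ and $\nabla_L$ has local form $d + \omega_L$ (with $\omega_L$ meromorphic, and holomorphic whenever $p$ is not one of the $t_i$), the local expression is $\eta = d\log(s) + \omega_L$, so $\mathrm{Res}_p(\eta) = m_p + \mathrm{Res}_p(\omega_L)$. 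Summing over all $p \in C$ yields
$$0 = \sum_{p \in C} \mathrm{Res}_p(\eta) = \deg(\mathrm{div}(s)) + \sum_{i=1}^\nu \mathrm{Res}_{t_i}(\omega_L),$$
and since $\deg(\mathrm{div}(s)) = \deg(L) = \deg(E)$, this gives exactly (\ref{eq:Fuchs}) after applying the reduction of the first paragraph.

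There is essentially no deep obstacle here: the argument is a standard application of the residue theorem to the logarithmic derivative $\nabla_L(s)/s$. The only mild technical point to verify is that the choice of the meromorphic section $s$ does not affect the final formula — any two choices differ by multiplication by a global meromorphic function $f$ on $C$, whose logarithmic derivative $df/f$ contributes $\sum_p \mathrm{Res}_p(df/f) = \deg(\mathrm{div}(f)) = 0$. Alternatively, one may appeal directly to Proposition \ref{prop:FormalInvCoordChange}, which guarantees that $\mathrm{Res}_{t_i}\!\left(\tr(\nabla)^{<0}_{t_i}\right)$ is intrinsically attached to $(E,\nabla)$ and independent of any auxiliary local trivialization.
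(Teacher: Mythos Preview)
Your proof is correct and is precisely what the paper has in mind: the paper does not spell out a proof but merely says the Fuchs relation ``follows from residue formula,'' and your argument via the determinant connection and the residue theorem on $\nabla_L(s)/s$ is the standard way to make that one-line hint precise. (Note, incidentally, that the paper writes $\det(E)$ in the displayed formula where $\deg(E)=\deg(\det(E))$ is clearly intended, as your write-up correctly reflects.)
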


\section{Explicit normalization}\label{sec:GlobalNormalization}

We now want to consider global rank 2 connections $(E,\nabla)$ on $\mathbb P^1$.
Denote by $x$ a coordinate on an affine chart of $\mathbb P^1$ and by $x=\infty$ the deleted point.
The vector bundle $\mathcal O\oplus\mathcal O(k)$ is defined by two charts, 
one with coordinates $(x,Y)\in\C\times\C^2$, and another one with coordinates $(\tilde x,\tilde Y)$ satisfying
$$x=1/\tilde x,\ \ \ Y=\begin{pmatrix}1&0\\ 0 & x^k\end{pmatrix}\tilde Y.$$
For $k>0$, the automorphism group of this vector bundle consists of transformations of the form
$$Y\mapsto \begin{pmatrix}u&0\\ F & v\end{pmatrix} Y$$
where $u,v\in\mathbb C^*$ and $F(x)$ is a polynomial of degree $\deg(F)\le k$.
A connection on $\mathcal O\oplus\mathcal O(k)$ takes the form $d+\Omega$ and $d+\tilde\Omega$ in the respective charts with
\begin{equation}\label{eq:SystemAtInfinity}
\Omega=\begin{pmatrix}a(x) & b(x)\\ c(x) & d(x)\end{pmatrix}dx\ \ \ \text{and}\ \ \ 
\tilde\Omega=\begin{pmatrix}-\tilde x^{-2}a(\tilde x^{-1}) & -\tilde x^{-k-2}b(\tilde x^{-1})\\ 
-\tilde x^{k-2}c(\tilde x^{-1}) & -\tilde x^{-2}d(\tilde x^{-1})-k\tilde x^{-1}\end{pmatrix}d\tilde x
\end{equation}
where $a,b,c,d$ are rational on $\P^1$.

Let us now fix the divisor of poles for $\nabla$: it can be written into the form
$$D=\sum_{i=1}^\nu n_i\cdot [t_i]+n_\infty\cdot[\infty]\ \ \ \text{and}\ \ \ n:=\deg(D)=\sum_{i=1}^\nu n_i + n_\infty$$
with $n_i\in\Z_{>0}$, $i=1,\ldots,\nu,\infty$. For simplicity, we will assume $n_\infty>$, i.e. that $\infty$ is indeed
a pole of the connections. In view of proving Proposition \ref{prop:DefNabla0},
let us assume that $E=\mathcal O\oplus\mathcal O(1)$. Then the matrix connection for $\nabla$ takes the form
$$\Omega=\begin{pmatrix}\frac{A(x)}{P(x)} & \frac{B(x)}{P(x)} \\ \frac{C(x)}{P(x)} & \frac{D(x)}{P(x)}\end{pmatrix}dx$$
where $P(x)=\prod_{i=1}^\nu(x-t_i)^{n_i}$ and $A,B,C,D$ are polynomials of degree 
\begin{equation}\label{eq:ConnectionAtInfinity}
\left\{\begin{matrix}
\deg(B)&\le&n-3\\
\deg(A),\deg(D)&\le& n-2\\
\deg(C)&\le& n-1
\end{matrix}\right.
\end{equation}
with at least one inequality which is an equality.
In fact, if $(E,\nabla)\in\U$ (see (\ref{eq:DefU})), then we know that $B\not\equiv0$ 
since $\O(1)$ is not $\nabla$-invariant. Moreover,
the apparent map $\varphi_\nabla$ (see (\ref{eq:sequenceApparent})) is defined by $B(x)$,
and the apparent divisor is
$$Q=\{B(x)=0\}\in \vert\mathcal O(n-3)\vert.$$
From now on, we assume that $(E,\nabla)\in\V$ (see (\ref{eq:DefV})), like in Proposition \ref{prop:DefNabla0},
which means that $B$ has degree $n-3$, with simple roots $q_1,\cdots,q_{(n-3)}$,
all of them distinct from the poles $t_i$'s.
Now, define the birational bundle transformation (see (\ref{eq:BiratModif}))
$$\phi_{\nabla}: \mathcal O\oplus\mathcal O(1)\dashrightarrow E_0=\mathcal O\oplus\mathcal O(n-2)$$
in matrix form by setting 
$$Y_1=MY\ \ \ \text{with}\ \ \ M=\begin{pmatrix}1&0\\ 0 & B(x)\end{pmatrix}.$$
We obtain the new matrix connection
$$\Omega_1= \begin{pmatrix}\frac{A(x)}{P(x)} & \frac{1}{P(x)} \\ \frac{B(x)C(x)}{P(x)} & \frac{D(x)}{P(x)}-\sum_{i=1}^{n-3}\frac{1}{x-q_i}\end{pmatrix}dx$$
and the change of chart is now given by 
$$\tilde x=1/x,\ \ \ \tilde Y_1=\begin{pmatrix}1&0\\ 0 & x^{n-2}\end{pmatrix}Y_1.$$
Geometrically, $\phi_\nabla$ is obtained by applying a positive elementary transformation directed by $\mathcal O(1)$
over each point $x=q_i$.
Finally, we normalize by applying a holomorphic bundle automorphism. Automorphisms of $E_0$ take the form
$$Y_0=MY_1\ \ \ \text{with}\ \ \ M=\begin{pmatrix}u&0\\ F & v\end{pmatrix}$$
where $u,v\in\C^*$ are constants and $F(x)$ is a polynomial of degree $\le n-2$.
By choosing $F=-A$, we can kill the $(1,1)$-coefficient of the matrix 
\begin{equation}\label{eq:Omega2}
\Omega_0= \begin{pmatrix}0 & \frac{1}{P(x)} \\ c_0(x) & d_0(x)
\end{pmatrix}dx.
\end{equation}
Geometrically, this means that {\it we can deform the subbundle $\mathcal O$ in a unique way such that $\nabla\vert_{\mathcal O}$ is the trivial 
connection} ! We note that the apparent map $\varphi_{\nabla_0}$ is not equal $1$;
moreover, the remaining freedom is given by  scalar automorphisms
(with $u=v$ and $F=0$), which commute with everything. This normalization is therefore unique
and we have proven the first part of Proposition \ref{prop:DefNabla0}.
Now, notice that the residual part of $\Omega_1$ at $x=q_j$ is given by 
$$\Res_{x=q_j}\Omega_1=\begin{pmatrix} 0&0\\ 0&-1\end{pmatrix}$$
so that residual eigenvalues are $-1$ and $0$, with $(-1)$-eigendirection contained in 
the subbundle $\Omega^1(D)$. Since automorphisms of $E_0$ preserve this subbundle, then $\Omega_0$ 
has the same property, therefore ending the proof of Proposition \ref{prop:DefNabla0}.
\hfill$\square$

\begin{prop}After the series of transformations above, we get the unique normal form (\ref{eq:Omega2})
and the coefficients take the form
\begin{equation}\label{eq:GlobalNormalForm}
\left\{\begin{matrix}c_0(x)=\sum_{i=1}^{\nu}\frac{C_i(x)}{(x-t_i)^{n_i}}+ \sum_{j=1}^{n-3}\frac{\pp_j}{x-q_j}+\tilde C(x)+x^{n-3}C_\infty(x) \\
d_0(x)=\sum_{i=1}^{\nu}\frac{D_i(x)}{(x-t_i)^{n_i}}+\sum_{j=1}^{n-3}\frac{-1}{x-q_j}+D_\infty(x)\hfill\hfill \end{matrix}\right.
\end{equation}
where
\begin{itemize}
\item $\deg(C_i),\deg(D_i)\le n_i-1$ for $i=1,\ldots,\nu$,
\item $\deg(C_\infty)\le n_\infty-1$ and $\deg(D_\infty)\le n_\infty-2$,
\item $\deg(\tilde C)\le n-4$.
\end{itemize}
\end{prop}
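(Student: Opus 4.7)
The plan is to start from the explicit matrix $\Omega_0$ produced in the preceding construction and decompose its $(2,1)$- and $(2,2)$-entries $c_0, d_0$ into partial fractions, then use the change-of-chart formula at infinity to pin down the polynomial parts.

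First, by construction $\Omega_0$ is a matrix of meromorphic $1$-forms with polar divisor $D+Q$, so $c_0(x)$ and $d_0(x)$ are rational functions whose only possible poles are at the $t_i$'s (with order at most $n_i$), at the $q_j$'s (simple poles), and at $\infty$. Proposition \ref{prop:DefNabla0} pins down the residues at each $q_j$: in the trivialization $E_0 = \mathcal O \oplus \Omega^1(D)$ the $(-1)$-eigenline is the second factor, hence
\[\mathrm{Res}_{x=q_j}\Omega_0 = \begin{pmatrix}0 & 0\\ \pp_j & -1\end{pmatrix},\]
so the residue of $d_0$ at $q_j$ is $-1$, while the residue $\pp_j$ of $c_0$ at $q_j$ is a free parameter. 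Standard partial fractions then yield
\[c_0(x) = \sum_i \frac{C_i(x)}{(x-t_i)^{n_i}} + \sum_j \frac{\pp_j}{x-q_j} + P_c(x),\]
\[d_0(x) = \sum_i \frac{D_i(x)}{(x-t_i)^{n_i}} + \sum_j \frac{-1}{x-q_j} + P_d(x),\]
with $C_i, D_i \in \mathbb C[x]$ of degree $\le n_i - 1$ and $P_c, P_d \in \mathbb C[x]$ (polynomial parts at $\infty$).

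Next, to bound $P_c$ and $P_d$ I apply the change-of-chart formula (\ref{eq:SystemAtInfinity}) with $k = n-2$, since $E_0 = \mathcal O \oplus \mathcal O(n-2)$. The $(2,2)$-entry of $\tilde\Omega_0$ near $\tilde x = 0$ reads $\bigl[-\tilde x^{-2} d_0(\tilde x^{-1}) - (n-2)\tilde x^{-1}\bigr] d\tilde x$; requiring pole order at most $n_\infty$ forces $\deg P_d \le n_\infty - 2$, and I set $D_\infty := P_d$. Similarly, the $(2,1)$-entry reads $-\tilde x^{n-4} c_0(\tilde x^{-1}) d\tilde x$; requiring pole order at most $n_\infty$ forces $\deg P_c \le n + n_\infty - 4$. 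Any such polynomial admits the unique decomposition $P_c(x) = \tilde C(x) + x^{n-3} C_\infty(x)$ with $\deg \tilde C \le n-4$ and $\deg C_\infty \le n_\infty - 1$, obtained by grouping monomials of degree $< n-3$ into $\tilde C$ and those of degree $\ge n-3$ into $x^{n-3} C_\infty$.

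No serious obstacle is anticipated; the only care needed is tracking the twist-by-$\mathcal O(n-2)$ factor $\tilde x^{n-4}$ in the change of chart for the $(2,1)$-entry, which is precisely what allows the polynomial part of $c_0$ to grow up to degree $n + n_\infty - 4$ and thereby produces the extra $x^{n-3} C_\infty(x)$ piece distinguishing the irregular global normal form from its logarithmic analogue.
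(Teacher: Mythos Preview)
Your proof is correct and follows essentially the same approach as the paper: partial-fraction decomposition of $c_0$ and $d_0$ along $D+Q$, using the residual structure at the $q_j$ established in Proposition~\ref{prop:DefNabla0}, and then bounding the polynomial parts via the change-of-chart formula (\ref{eq:SystemAtInfinity}) with $k=n-2$. Your presentation is in fact slightly more explicit about the residue matrix at $q_j$ and the degree count at infinity, but the argument is the same.
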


As we shall see, polynomials $C_i,D_i$, for $i=1,\ldots,\nu,\infty$, are determined by the formal data $\Lambda$
(see Lemma \ref{lem:CiDi}), and $\tilde C$ by the fact that $q_j$ are apparent singular points (see Lemma \ref{Lem:tildeC}).

\begin{proof}Coefficients $c_0$ and $d_0$ of (\ref{eq:Omega2}) have pole equation $P(x)B(x)=\prod_i(x-t_i)^{n_i}\prod_j(x-q_j)=0$.
Taking into account the pole order at $x=\infty$, 
formula (\ref{eq:SystemAtInfinity}) with $k=n-2$ yields 
$$\deg(c_0)=n+n_\infty-4\ \ \ \text{and}\ \ \ \deg(d_0)=n_\infty-2.$$
On the other hand, the principal part of $c_0$, say, along the polar divisor $D+Q$ write
$$\sum_{i=1}^{\nu}\frac{C_i(x)}{(x-t_i)^{n_i}}+ \sum_{j=1}^{n-3}\frac{\pp_j}{x-q_j}$$
so that the difference should be a polynomial of degree $n+n_\infty-4$. We split is as 
$\tilde C(x)+x^{n-3}C_\infty(x)$ in order to underline the contribution $C_\infty(x)$
to the principal part at $x=\infty$. The decomposition of $d_0$ is similar, except 
that we know that it has residues $-1$ along $Q$.
\end{proof}

The $0$-eigendirection of $\nabla_0$ over $x=q_j$ is defined by $\begin{pmatrix}1\\ \pp_j\end{pmatrix}$
so that we have canonically associated the collection of points $\{\mathbf p_1,\ldots,\mathbf p_{(n-3)}\}$ 
where $\mathbf p_j=(q_j,\pp_j)$ stands for the point of 
$\OMD$, the total space of $\Omega^1(D)$, defined by the section $\pp_j\cdot\frac{dx}{P(x)}$ at $x=q_j$.
The map $\Phi$ of Proposition \ref{prop:AppIsom} is therefore explicitely defined as follows. 
We first define $\Phi'$  on the open set 
$\V$ by 
$$\Phi'(E,\nabla)=\{\mathbf p_1,\ldots,\mathbf p_{(n-3)}\}\in\mathrm{Sym}^{(n-3)}(\OMD).$$
Recall that $H:\mathrm{Hilb}^{(n-3)}(\OMD)\to\mathrm{Sym}^{(n-3)}(\OMD)$ is a birational morphism
(see \cite{Gottsche}) which consists of blowing-up along the big diagonal. The complement 
$\W'$ of the diagonal is exactly the image of $\W$, in restriction to which $H$ becomes a biregular isomorphism.
Then we can lift $\Phi':\V\to\W'$ as a map $\Phi:\V\to\W$ such that $H\circ\Phi=\Phi'$. 
We therefore obtain a commutative diagram\vskip1cm
$$\xymatrix{
\Con_{\Mu}^\Lambda \ar@{.>}[r] \ar@/^2pc/[rr]^{\Phi} & \mathrm{Sym}^{(n-3)}(\OMD)  & \mathrm{Hilb}^{(n-3)}(\OMD) \ar[l]\\
\V \ar@{^{(}->}[u] \ar[r]^{\sim} \ar@/^2pc/[rr]^{\sim} \ar[dr]^{\App} & \W' \ar@{^{(}->}[u] \ar[d]^q & \W \ar[l]_{\sim} \ar@{^{(}->}[u] \ar[ld]^\Pi \\
& W & \subset \vert\mathcal O(n-3)\vert
}$$
which proves the first part of Proposition \ref{prop:AppIsom}. In order to end-up the proof,
it remains to show that $\Phi$ is birational, and in particular inducing  a biregular isomorphism 
$\Phi:\V\stackrel{\sim}{\to}\W$. In the next section, we construct an inverse of $\Phi$ on $\W$.

\section{Birationality of $\Phi$}\label{Sec:PhiBirat}

In this section, we want to reverse the map $\Phi$ on a Zariski open set.
Namely, we now want to prove that, knowing local formal data $\Lambda$,
and knowing the position $\{\mathbf p_1,\ldots,\mathbf p_{(n-3)}\}$ of $0$-eigendirections of apparent singular points,
we can uniquely determine the matrix connection $\Omega_0$ defined by (\ref{eq:Omega2}) and (\ref{eq:GlobalNormalForm}), i.e. the connection $(E_0,\nabla_0)$. 
If true, we can recover $(E,\nabla)$ up to isomorphism by applying an elementary transformation
at the $0$-eigendirection of each apparent pole $q_j$.

\begin{lem}\label{lem:CiDi}
The polynomials $C_i,D_i$ occuring in formula (\ref{eq:GlobalNormalForm}) can be determined 
by the local formal data $\Lambda_{t_i}$ at $t_i$ for $i=1,\ldots,\nu,\infty$.
\end{lem}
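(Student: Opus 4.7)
The plan is to use Proposition \ref{prop:ConsFormInv} (extended to infinity by Proposition \ref{prop:FormalInvCoordChange}), which tells us that $\Lambda_{t_i}$ is equivalent to the pair of polar tails
$$\bigl(\,[\tr(\Omega_0)]^{<0}_{t_i}\,,\,[\det(\Omega_0)]^{<-n_i}_{t_i}\,\bigr).$$
For the companion form (\ref{eq:Omega2}) these two invariants take a very clean shape, namely
$$\tr(\Omega_0)=d_0(x)\,dx\ \ \text{and}\ \ \det(\Omega_0)=-\frac{c_0(x)}{P(x)}\,(dx)^{\otimes 2},$$
so the whole lemma reduces to extracting $D_i$ from the principal part of $d_0\,dx$ and $C_i$ from the ``deep'' principal part (orders $>n_i$) of $c_0/P$ at each pole.

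For $D_i$ at a finite pole $t_i$: inspecting (\ref{eq:GlobalNormalForm}), the polar part of $d_0\,dx$ at $t_i$ is literally $\frac{D_i(x)}{(x-t_i)^{n_i}}\,dx$, and since $\deg D_i\le n_i-1$, its $n_i$ Laurent coefficients at $t_i$ encode the whole polynomial $D_i$. Hence $D_i$ is read off directly from $[\tr(\Omega_0)]^{<0}_{t_i}$.

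For $C_i$ at a finite pole $t_i$: write $P(x)=(x-t_i)^{n_i}P_i(x)$ with $P_i(t_i)\neq 0$. All the other summands contributing to $c_0$ are holomorphic at $t_i$, so
$$\frac{c_0(x)}{P(x)}\ =\ \frac{C_i(x)}{(x-t_i)^{2n_i}P_i(x)}\ +\ (\text{pole of order}\le n_i\ \text{at}\ t_i).$$
Thus the $n_i$ Laurent coefficients of $c_0/P$ at orders $\frac{1}{(x-t_i)^{n_i+1}},\ldots,\frac{1}{(x-t_i)^{2n_i}}$, which constitute $[\det(\Omega_0)]^{<-n_i}_{t_i}$, depend only on $C_i$. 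Writing $u=x-t_i$ and $1/P_i=\sum_k p_k u^k$ with $p_0=1/P_i(t_i)\neq 0$, the map from the coefficients of $C_i$ to these $n_i$ Laurent coefficients is a triangular linear system with nonzero diagonal $p_0$, hence invertible, and $C_i$ is recovered from $[\det(\Omega_0)]^{<-n_i}_{t_i}$.

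For the pole at $\infty$, one repeats the argument in the chart $\tilde x=1/x$ using the transformation rule (\ref{eq:SystemAtInfinity}) with $k=n-2$. A direct substitution shows that each summand $\frac{D_j(x)}{(x-t_j)^{n_j}}$ and each $\frac{-1}{x-q_j}$ pulls back to a function of $\tilde x$ that vanishes at $\tilde x=0$, so only the $D_\infty(x)$ summand can produce polar coefficients in $\tilde x^{-2},\tilde x^{-3},\ldots,\tilde x^{-n_\infty}$ of $\tr(\tilde\Omega_0)$, yielding $D_\infty$. Similarly, only the $x^{n-3}C_\infty(x)$ summand of $c_0$ can contribute to coefficients of order $>n_\infty$ in $\det(\tilde\Omega_0)$ at $\tilde x=0$, and the degree bounds $\deg\tilde C\le n-4$, $\deg C_i\le n_i-1$, and $\deg\zeta_j\cdot(x-q_j)^{-1}$-contribution guarantee they all stay within the ``shallow'' region of order $\le n_\infty$; the resulting map on the coefficients of $C_\infty$ is again triangular with $\tilde P(0)=1$ on the diagonal, hence invertible, so $C_\infty$ is determined by $\Lambda_{t_\infty}$. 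The main obstacle is just this bookkeeping at infinity, where the chart change (\ref{eq:SystemAtInfinity}) mixes contributions from every pole; the key point is that the precise degree bounds in (\ref{eq:GlobalNormalForm}) were chosen so that the relevant contributions separate cleanly by order of pole.
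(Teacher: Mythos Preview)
Your argument is correct and follows essentially the same route as the paper: compute $\tr(\Omega_0)=d_0\,dx$ and $\det(\Omega_0)=-\tfrac{c_0}{P}(dx)^{\otimes 2}$ and read $D_i$, $C_i$ off their principal parts at $t_i$ (and similarly at $\infty$). Your triangular–system justification for recovering $C_i$ from $[\det(\Omega_0)]^{<-n_i}_{t_i}$ is in fact a more careful version of what the paper writes; the paper records the leading contribution $-b_iC_i(x)\bigl(\tfrac{dx}{(x-t_i)^{n_i}}\bigr)^{\otimes2}$, whereas for $n_i\ge 2$ the unit $1/P_i(x)=b_i+O(x-t_i)$ produces lower-order corrections that are exactly absorbed by the invertible triangular system you describe.
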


\begin{proof}Recall (see Section \ref{ss:LocalConsequences}) that the formal data at $x=t_i$ is determined
by the negative parts of the trace and determinant 
$$[\mathrm{tr}(\Omega_0)]^{<0}_{x=t_i}\ \ \ \text{and}\ \ \ [\det(\Omega_0)]^{<-n}_{x=t_i}$$
which are determined by the negative part 
\begin{equation}\label{eq:LocGaugeOmega2}
[\Omega_0]^{<0}_{x=t_i}=\begin{pmatrix}0 & b_i\\ C_i(x) & D_i(x)\end{pmatrix}\frac{dx}{(x-t_i)^{n_i}}\ \ \ \text{where}\ \ \ 
b_i=\frac{1}{\prod_{j\not=i}(t_i-t_j)^{n_j}}.
\end{equation}
We easily deduce that 
\begin{equation}\label{eq:CiDiFormalData}
[\mathrm{tr}(\Omega_0)]^{<0}_{x=t_i}=D_i(x)\frac{dx}{(x-t_i)^{n_i}}\ \ \ \text{and}\ \ \ 
[\det(\Omega_0)]^{<-n}_{x=t_i}=-b_i C_i(x)\left( \frac{dx}{(x-t_i)^{n_i}}\right).
\end{equation}
Therefore, $C_i$ and $D_i$ are uniquely determined by the formal data.

Finally, the negative part at $x=\infty$ ($\Leftrightarrow \zeta=0$) is given by 
\begin{equation}\label{eq:LocGaugeOmega2Infty}
[\tilde\Omega_0]^{<0}_{\zeta=0}\ =\ -\begin{pmatrix}0 & \left[\frac{1}{\frac{1}{\zeta}P\left(\frac{1}{\zeta}\right)}\right]^{<0}_{\zeta=0} \\ \frac{1}{\zeta}C_\infty\left(\frac{1}{\zeta}\right) & 
\frac{1}{\zeta^2}D_\infty\left(\frac{1}{\zeta}\right)+\frac{1+\sum_{i=1}^\nu\tau_i}{\zeta}\end{pmatrix}d\zeta
\end{equation}
where $\tau_i=\mathrm{Res}_{x=t_i}\frac{D_i(x)}{(x-t_i)^{n_i}}dx$ is the residue of the $(2,2)$-coefficient at $t_i$.
Note that the trace of residue at $x=\infty$ has to be $1+\sum_{i=1}^\nu\tau_i$ by Fuchs relation 
(see Proposition \ref{prop:Fuchs}) for the initial connection
$\nabla$ on $E=\mathcal O\oplus\mathcal O(1)$. Again, we can determine $C_\infty,D_\infty$ in terms of local formal
data $\Lambda_\infty$ as in formula (\ref{eq:CiDiFormalData}).
\end{proof}

\begin{remark}We have just proved that the principal part of the matrix connection at essential poles $t_i$'s
depends only on local formal data $\Lambda$. It is independant of the choice of the connection in the moduli
space $\Con_{\Mu}^\Lambda$. Moreover, the local Hukuhara-Levelt-Turritti decomposition at poles $t_i$'s
is fixed independly of the connection up to order $n_i-1$ as proved in the following. 
\end{remark}

\begin{cor}\label{cor:FixedLeveltDec}
The local (possibly ramified) formal decomposition $E=L^+\oplus L^-$ by $\nabla_2$-invariant subbundles $L^{\pm}$
at each singular point $t_i$ is determined up to order $n_i-1$ by the following equation
$$J^{n_i-1}\bigg(L^{\pm}\bigg)=\begin{pmatrix}1\\ J^{n_i-1}(y)\end{pmatrix}\ \ \ \text{where}\ \ \ y^2=\left(\prod_{j\not=i}(x-t_j)^{n_j}\right)\bigg(C_i(x)+yD_i(x)\bigg).$$
In particular, it depends only on formal invariants, not of accessory parameters.
\end{cor}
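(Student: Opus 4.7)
I would derive a Riccati equation for the eigensections of $\nabla_0$ at $t_i$, then show that modulo $(x-t_i)^{n_i}$ it reduces to the algebraic equation in the statement.

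First, I observe that the leading matrix of $\Omega_0$ at $t_i$ is
$$A_0 = \begin{pmatrix} 0 & b_i \\ c_{i,0} & d_{i,0} \end{pmatrix},$$
with $b_i \ne 0$ (see (\ref{eq:LocGaugeOmega2})). Since $A_0 \begin{pmatrix} 0 \\ 1 \end{pmatrix} = \begin{pmatrix} b_i \\ d_{i,0} \end{pmatrix}$ is not proportional to $\begin{pmatrix} 0 \\ 1 \end{pmatrix}$, neither eigenvector of $A_0$ is vertical. Hence the formal invariant sub-line-bundles $L^\pm$ produced by Propositions \ref{prop:FormalUnram} and \ref{prop:LocalNormalRamified} admit generators of the form
$$s^\pm = \begin{pmatrix} 1 \\ y^\pm(x) \end{pmatrix},$$
where $y^\pm$ is a formal series in $x-t_i$ (unramified case) or $\sqrt{x-t_i}$ (ramified case), bounded at $t_i$.

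Next, I impose the invariance condition $\nabla_0 s^\pm = \lambda \otimes s^\pm$ for some $1$-form $\lambda$. Using (\ref{eq:Omega2}), the first component forces $\lambda = (y^\pm/P)\,dx$, and the second component then yields the Riccati equation
$$(y^\pm)^2 = P(x)(y^\pm)'(x) + P(x)c_0(x) + P(x)d_0(x)y^\pm.$$
Writing $P(x) = (x-t_i)^{n_i} P_i(x)$ with $P_i = \prod_{j\ne i}(x-t_j)^{n_j}$, and splitting $c_0 = C_i/(x-t_i)^{n_i} + c_0^{\mathrm{reg}}$ and $d_0 = D_i/(x-t_i)^{n_i} + d_0^{\mathrm{reg}}$ into principal and regular parts at $t_i$, each of $P(y^\pm)'$, $P c_0^{\mathrm{reg}}$, $P d_0^{\mathrm{reg}} y^\pm$ acquires a factor $(x-t_i)^{n_i}$ (the last because $y^\pm$ is bounded; in the ramified case one accounts for the square-root uniformizer). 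Hence modulo $(x-t_i)^{n_i}$:
$$(y^\pm)^2 \equiv P_i(x)\bigl(C_i(x) + D_i(x) y^\pm\bigr),$$
which is precisely the equation in the statement.

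Finally, I solve this truncated quadratic. In the unramified case the discriminant $P_i^2 D_i^2 + 4 P_i C_i$ is a unit in $\mathbb{C}[[x-t_i]]$, equivalent to $A_0$ having distinct eigenvalues, so both branches $y^\pm$ are honest formal power series, uniquely determined modulo $(x-t_i)^{n_i}$ by the quadratic formula. In the ramified case the discriminant vanishes to odd order at $t_i$ (Proposition \ref{prop:ConsFormInv}), and after extracting $\zeta = \sqrt{x-t_i}$ the two roots $y^\pm \in \mathbb{C}[[\zeta]]$ are exchanged by $\zeta \mapsto -\zeta$, reproducing the ramified decomposition of Proposition \ref{prop:LocalNormalRamified}. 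In either case $J^{n_i-1}(y^\pm)$ depends only on $P_i, C_i, D_i$, and therefore only on the formal data $\Lambda$ by Lemma \ref{lem:CiDi}, not on the accessory parameters. The main technical nuance lies in the ramified case, where one must check that the odd-order vanishing of the discriminant combines correctly with the derivative term $P(y^\pm)'$ so that the $(n_i-1)$-jet of $y^\pm$ in the ramified uniformizer remains unambiguously determined by the truncated equation.
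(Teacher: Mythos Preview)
Your proof is correct and is precisely the ``straightforward computation with the negative part'' that the paper's one-line proof alludes to: you write out the Riccati equation for the slope $y$ of an eigensection of the companion matrix $\Omega_0$ and observe that, modulo $(x-t_i)^{n_i}$, the derivative term and the regular parts of $c_0,d_0$ drop out, leaving exactly the algebraic equation in the statement. This is the same computation the paper has in mind, only spelled out in full; your treatment of the unramified case is complete and your identification of the ramified subtlety (that $P(y^\pm)'$ is $O(\zeta^{2n_i-1})$ rather than $O(\zeta^{2n_i})$, competing with the simple zero of $2y-P_iD_i$) is a genuine point that the paper's terse proof leaves implicit as well.
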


\begin{proof}Straightforward computation with the negative part of (\ref{eq:GlobalNormalForm}) which is given 
by (\ref{eq:LocGaugeOmega2}) and  (\ref{eq:LocGaugeOmega2Infty}).
\end{proof}

\begin{lem}\label{Lem:tildeC}
The polynomial $\tilde C$ occuring in formula (\ref{eq:GlobalNormalForm}) is uniquely determined
by the fact that all singular points 
$x=q_j$ are apparent (i.e. not logarithmic).
\end{lem}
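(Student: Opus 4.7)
The strategy is to reduce the apparency condition at each $q_j$ to the single linear constraint $\tilde C(q_j)=R_j$ for an explicit $R_j$ depending only on the already-fixed data ($\Lambda$ and $\{\pp_k,q_k\}$), and then invoke Lagrange interpolation. Since $\tilde C$ is a polynomial of degree $\le n-4$ (hence has $n-3$ free coefficients) and the $n-3$ apparent points $q_j$ are pairwise distinct (as $Q\in W$ is reduced), this uniquely determines $\tilde C$.

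Concretely, I would work in the local coordinate $z=x-q_j$ at $q_j$. The matrix $\Omega_0$ has a simple pole at $z=0$ with residue
$$R=\begin{pmatrix}0 & 0\\ \pp_j & -1\end{pmatrix},$$
having distinct eigenvalues $0$ and $-1$ with respective eigenvectors $(1,\pp_j)^T$ and $(0,1)^T$. Conjugating by the constant matrix with these eigenvectors as columns diagonalizes the residue and brings the connection into the form
$$\tilde\Omega=\left(\frac{\diag(0,-1)}{z}+B_0+B_1z+\cdots\right)dz.$$
Since the eigenvalues differ by the positive integer $1$, the pole is potentially logarithmic. A short computation conjugating by $M=I+z^{k+1}N_k$ shows that $([R,N_k]+(k+1)N_k)_{ij}=(N_k)_{ij}(R_{ii}-R_{jj}+k+1)$: for $k\ge 1$ all four entries of $B_k$ can be killed by a formal gauge transformation, while at $k=0$ the factor vanishes exactly at $(i,j)=(2,1)$. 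Hence the entries $(B_0)_{11},(B_0)_{12},(B_0)_{22}$ and all higher $B_k$ can be removed, whereas $(B_0)_{21}$ is the single resonant obstruction, so $q_j$ is apparent if and only if $(B_0)_{21}=0$.

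Computing $(B_0)_{21}$ explicitly from the conjugation gives
$$(B_0)_{21}=\gamma_0+\pp_j\delta_0-\frac{\pp_j^2}{P(q_j)},$$
where $\gamma_0,\delta_0$ denote the values at $x=q_j$ of the regular parts of $c_0$ and $d_0$. Inspecting (\ref{eq:GlobalNormalForm}), the polynomial $\tilde C$ enters $c_0$ only through the summand $\tilde C(x)$, so $\gamma_0=\tilde C(q_j)+\Gamma_j$ with $\Gamma_j$ depending only on the fixed data $\{C_i,C_\infty,\pp_k,q_k\}_{k\ne j}$, while $\delta_0$ and $P(q_j)$ do not involve $\tilde C$ at all. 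The apparency condition at $q_j$ therefore rewrites as $\tilde C(q_j)=R_j$ for an explicit $R_j$ independent of $\tilde C$, and Lagrange interpolation on the $n-3$ distinct $q_j$'s produces the unique polynomial of degree $\le n-4$ taking these prescribed values.

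The main obstacle is the resonance analysis in the second step: identifying that the entire apparency obstruction reduces to the single scalar $(B_0)_{21}$ rather than a larger system of constraints from higher order terms $B_k$. Once this is in hand, the explicit computation of $(B_0)_{21}$ in terms of $\tilde C$ is routine and the conclusion via Lagrange interpolation is immediate.
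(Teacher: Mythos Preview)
Your proposal is correct and arrives at exactly the same scalar condition as the paper, namely
\[
\tilde C(q_j)+\hat c_j(q_j)+\pp_j\,\hat d_j(q_j)-\frac{\pp_j^2}{P(q_j)}=0,
\]
which is your $(B_0)_{21}=\gamma_0+\pp_j\delta_0-\pp_j^2/P(q_j)$ rewritten. The conclusion via $n-3$ interpolation conditions on the distinct $q_j$ is identical.

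The only difference is one of framing rather than substance. The paper applies the concrete meromorphic gauge $M=\begin{pmatrix}1&0\\ \pp_j & x-q_j\end{pmatrix}$ and checks directly when the transformed matrix is holomorphic; this factors as your constant diagonalizing conjugation $S=\begin{pmatrix}1&0\\ \pp_j&1\end{pmatrix}$ followed by the elementary transformation $\diag(1,z)$. You instead stay within holomorphic gauge, diagonalize the residue by $S$, and invoke the standard resonance analysis to isolate $(B_0)_{21}$ as the unique obstruction. Your route makes transparent \emph{why} a single scalar suffices (this is the point you flag as the main obstacle), at the cost of a short extra argument; the paper's route is quicker because the specific elementary transformation is already at hand from the construction of $\Phi$. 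Either way the computation collapses to the same equation, so the two proofs are essentially the same.
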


\begin{proof}For each $x=q_j$, we can write
$$\Omega_0=\begin{pmatrix}0 & \frac{1}{P(x)} \\ \frac{\pp_j}{x-q_j}+\tilde C(x)+\hat{c}_j(x) & -\frac{1}{x-q_j}+\hat{d}_j(x)\end{pmatrix}dx$$
where $\hat{c}_j,\hat{d}_j$ are holomorphic at $x=q_j$. The singular point $x=q_j$ is apparent if, and only if, it disappears
after elementary transformation $Y_0=M\tilde Y_0$ with 
$$M=\begin{pmatrix}1& 0\\ \pp_j & x-q_j\end{pmatrix}.$$
We can check that the new matrix connection is holomorphic if, and only if, 
$$\tilde C(q_j)+\hat{c}_j(q_j)+\pp_j\hat{d}_j(q_j)-\frac{\pp_j^2}{P(q_j)}=0.$$
Running over the $n-3$ distinct points $q_j$, we get $n-3$ independant relations on $\tilde C(x)$ which is therefore 
uniquely determined.
\end{proof}

We can now conclude the proof of our results.

\begin{proof}[Proof of Lemma \ref{lem:UniversalNabla0}, Proposition \ref{prop:AppIsom} and Theorem \ref{thm:BiratModel}.]
Given the polar divisor $D$, the formal data $\Lambda$ and $\{\mathbf p_1,\ldots,\mathbf p_{(n-3)}\}\in\W \subset \mathrm{Hilb}^{(n-3)}(\OMD)$, we can reconstruct $\nabla_0$ on $E_0$ uniquely satisfying all properties of Lemma \ref{lem:UniversalNabla0}
as follows. First of all, the matrix $\Omega_0$ of $\nabla_0$ must be in companion form (\ref{eq:Omega2}) (item 2 of the Lemma)
and coefficients $c_0,d_0$ of the form (\ref{eq:GlobalNormalForm}) (items 1 and 4 of the Lemma).
By Lemmae \ref{lem:CiDi} and \ref{Lem:tildeC}, the polynomials $C_i,D_i$ and $\tilde C$ are determined 
by local formal data $\Lambda_{t_i}$ and the fact that all $x=q_j$ are apparent (items 3 and 4 of Lemma).
Finally, $\mathbf p_j=(q_j,\pp_j)$ determines $q_j$ and  $\pp_j$ (item 4) and Lemma \ref{lem:UniversalNabla0} is proved.

After performing a negative elementary transformation of the vector bundle $E_0$ at each point $q_j$
in order to get rid of these apparent poles, we get a new connection $(E,\nabla)$ which has
the property that $\Phi(E,\nabla)=\{\mathbf p_1,\ldots,\mathbf p_{(n-3)}\}$. Therefore, $\Phi$
admits an inverse in restriction to $\V\leftrightarrow\W$, proving the second assertion 
of Proposition \ref{prop:AppIsom}, and therefore Theorem \ref{thm:BiratModel}.
\end{proof}

\section{Symplectic structure and Lagrangian fibration}

We have identified the open set $\V\subset\Con_{\Mu}^\Lambda$ of those connections $(E,\nabla)$ 
with $E=\mathcal O\oplus\mathcal O(1)$ with $q_j\not=t_i$ and $q_j\not=q_k$, $j\not=k$, with the open 
set $\W\subset\mathrm{Hilb}^{(n-3)}(\OMD)$, also defined by the same 
restrictions on $q_j$ (see introduction).

On the other hand, this latter space has dimension $2(n-3)$ and can be equipped with a meromorphic 
$2$-form $\omega$ inducing a symplectic structure on a large open set including $\Pi^{-1}(W)$.
Indeed, the natural map  which to a (local holomorphic) section 
$\zeta(x)\cdot\frac{dx}{P(x)}$ of $\Omega^1(D)$ associates the meromorphic section $\frac{\zeta(x)}{P(x)}\cdot dx$
of $\Omega^1$ induces a rational map $\psi:\OMD\dashrightarrow \OM0$ between the total spaces.
The Liouville symplectic form $\omega_{Liouv}$ on $\OM0$ induces by pull-back a $2$-form
$\omega:=\psi^*\omega_{Liouv}$.
In local coordinates $(x,\zeta)$ on $\OMD$ where $\zeta$ stands for the section $\zeta\cdot\frac{dx}{P(x)}$,
we have $\omega=d\zeta\wedge\frac{dx}{P(x)}$. The $2$-form $\sum_{j=1}^{n-3}d\zeta_j\wedge\frac{dx_j}{P(x_j)}$
on the product $\OMD^{(n-3)}$ is obviously invariant under permutation of factors
and defines a rational $2$-form on the quotient $\mathrm{Sym}^{(n-3)}\OMD$
with poles along hypersurfaces $t_i=q_j$, and therefore on the corresponding open set $\W\subset\mathrm{Hilb}^{(n-3)}\OMD$. Finally, the symplectic form induced on $\V\subset\Con_{\Mu}^\Lambda$
via $\Phi$ has the explicit expression
\begin{equation}\label{eq:SymplecticForm}
\omega=\sum_{j=1}^{n-3}dp_j\wedge dq_j\ \ \ \text{where}\ \ \ p_j:=\frac{\pp_j}{P(q_j)}
\end{equation}
Recently, Komyo proved in \cite{Komyo} that it coincides with the natural symplectic form
on moduli spaces defined by Attyiah-Bott, and generalized in the irregular case by Boalch in \cite{Boalch}
and by Inaba and Saito in \cite{InabaSaito,Inaba}.
In the next section, we check this in the Fuchsian case where everything is known to be explicit.

\section{Link with scalar equation}\label{sec:Scalar}

Our normal form (\ref{eq:GlobalNormalForm}) almost looks like a companion matrix. After setting 
$$Y_2=MY_3\ \ \ \text{with}\ \ \ M=\begin{pmatrix}1&0\\ 0 & -P(x)\end{pmatrix},$$
we obtain the matrix connection in companion form
$$\Omega_3=\begin{pmatrix}0 &-1 \\ -\frac{c_2}{P} & d_2+\frac{P'}{P}\end{pmatrix}$$
that corresponds to the scalar differential equation
\begin{equation}\label{eq:scalar}
u''+\left(d_2+\frac{P'}{P}\right)u'+\left(-\frac{c_2}{P}\right)u=0.
\end{equation}
Starting from Okamoto's Fuchsian equation for Garnier system (see \cite[formula (0.8)]{Kimura})
$$u''+\left(\frac{1-\kappa_0}{x}+\frac{1-\kappa_1}{x-1}+\sum_{i=1}^{n-3}\frac{1-\theta_i}{x-t_i}
-\sum_{j=1}^{n-3}\frac{1}{x-q_j}\right)u'$$
$$+\left(\frac{\kappa}{x(x-1)}-\sum_{i=1}^{n-3}\frac{t_i(t_i-1)K_i}{x(x-1)(x-t_i)}
+\sum_{j=1}^{n-3}\frac{q_j(q_j-1)\mu_j}{x(x-1)(x-q_j)}\right)u=0$$
with $K_i$ determined by \cite[formula (0.9)]{Kimura}, we get the normal form 
$$\Omega_2= \begin{pmatrix}0 & \frac{1}{P(x)} \\ c_2(x) & d_2(x)\end{pmatrix}dx$$
with 
$$\left\{\begin{matrix}
c_2(x)=-\sum_{j=1}^{n-3}\frac{P(q_j)\mu_j}{x-q_j}+C(x),\hfill\hfill \\
d_2(x)=-\frac{\kappa_0}{x}-\frac{\kappa_1}{x-1}-\sum_{i=1}^{n-3}\frac{\theta_i}{x-t_i}
-\sum_{j=1}^{n-3}\frac{1}{x-q_j}
\end{matrix}\right.$$
where $C(x)$ is a polynomial of degree $\le n-3$ and $P(x)=x(x-1)\prod_{i=1}^{n-3}(x-t_i)$.
The symplectic form $\omega_0=\sum_{j=1}^{n-3}d\mu_j\wedge d\lambda_j$ in \cite[page 47]{Kimura} (here $\lambda_j=q_j$)
coincide up to a sign with the natural symplectic form of our normal form
$\omega=\sum_{j=1}^{n-3}dp_j\wedge dq_j$. By a direct computation, one easily check that 
$\omega$ is also the symplectic form in \cite{DM} since their canonical coordinates
are given by 
$$q_j+\frac{1-\kappa_0}{q_j}+\frac{1-\kappa_1}{q_j-1}+\sum_{i=1}^{n-3}\frac{1-\theta_i}{q_j-t_i}.$$
We expect it is also possible to generalize the Hamiltonian system described in \cite{DM}
to the irregular case following our approach.

Starting from Kimura's most degenerate scalar equation (see \cite[L(9/2;2) page 37]{Kimura})
$$u''+\left(-\sum_{j=1}^{2}\frac{1}{x-q_j}\right)u'
+\left(-9x^5-9t_1x^3-3t_2x^2-3K_2x-3K_1
+\sum_{j=1}^{2}\frac{\mu_j}{(x-q_j)}\right)u=0$$
with $K_1,K_2$ determined in \cite[H(9/2) page 40]{Kimura}, we get the normal form (\ref{eq:GlobalNormalForm}) with
$$\left\{\begin{matrix}
c_2(x)=9x^5+9t_1x^3+3t_2x^2+3K_2x+3K_1
-\sum_{j=1}^{2}\frac{\mu_j}{(x-q_j)}, \\
d_2(x)=-\sum_{j=1}^{2}\frac{1}{x-q_j}\hfill\hfill
\end{matrix}\right.$$
and again we find the same symplectic structure up to a sign: $\omega=\sum_{j=1}^2dp_j\wedge dq_j$, $p_j=-\mu_j$.

\bibliographystyle{amsplain}

\providecommand{\bysame}{\leavevmode\hbox to3em{\hrulefill}\thinspace}
\providecommand{\MR}{\relax\ifhmode\unskip\space\fi MR }
\providecommand{\MRhref}[2]{%
  \href{http://www.ams.org/mathscinet-getitem?mr=#1}{#2}
}
\providecommand{\href}[2]{#2}
\begin{thebibliography}{}

\end{thebibliography}


\begin{thebibliography}{9}
\frenchspacing

\bibitem{ArinkinFedorov}
{\sc D. Arinkin, R. Fedorov},
\emph{An example of the Langlands correspondence for irregular rank two connections on $\P^1$.}
Adv. Math. {\bf 230} (2012) 1078-1123. 

\bibitem{Boalch}
{\sc P. Boalch},
\emph{Symplectic manifolds and isomonodromic deformations.}
Adv. Math. {\bf 163} (2001) 137-205. 

\bibitem{BMM}
{\sc  A. A. Bolibruch, S. Malek, C. Mitschi}, 
\emph{On the generalized Riemann-Hilbert problem with irregular singularities.} 
Expo. Math. {\bf 24} (2006) 235-272. 

\bibitem{DM}
{\sc B.  Dubrovin, M. Mazzocco}, 
\emph{Canonical structure and symmetries of the Schlesinger equations.} 
Comm. Math. Phys. {\bf 271} (2007) 289-373.

\bibitem{Gottsche} {\sc L. G\"ottsche},
\emph{Hilbert schemes of zero-dimensional subschemes of smooth varieties.}
Lecture Notes in Mathematics, 1572. Springer-Verlag, Berlin, 1994.


\bibitem{Heu} {\sc V. Heu},
\emph{Universal isomonodromic deformations of meromorphic rank 2 connections on curves.} 
Ann. Inst. Fourier {\bf 60} (2010) 515-549. 

\bibitem{IY}
{\sc Yu. Ilyashenko, S. Yakovenko},
\emph{Lectures on analytic differential equations.}
Graduate Studies in Mathematics, 86. American Mathematical Society, Providence, RI, 2008. xiv+625 pp. 

\bibitem{IIS}
{\sc M. Inaba, K. Iwasaki and M.-H. Saito},  
\emph{Dynamics of the sixth Painlev\'e equation.} 
Th\'eories asymptotiques et \'equations de Painlev\'e, 103-167, 
S\'emin. Congr., {\bf 14}, Soc. Math. France, Paris, 2006.

\bibitem{InabaSaito} 
{\sc M. Inaba, M.-H. Saito}, 
\emph{Moduli of unramified irregular singular parabolic connections on a smooth projective curve.} 
Kyoto J. Math. {\bf 53} (2013) 433-482. 

\bibitem{Inaba} 
{\sc M. Inaba}, 
\emph{Moduli space of irregular singular parabolic connections of generic ramified type on a smooth projective curve.} 
arXiv:1606.02369

\bibitem{Kawamuko}
{\sc H. Kawamuko}, 
\emph{On the Garnier system of half-integer type in two variables.} 
Funkcial. Ekvac. {\bf 52} (2009) 181-201.

\bibitem{Kimura}
{\sc H. Kimura}, 
\emph{The degeneration of the two-dimensional Garnier system and the polynomial Hamiltonian structure. }
Ann. Mat. Pura Appl. {\bf 155} (1989) 25-74. 

\bibitem{Komyo}
{\sc A. Komyo}, 
\emph{Description of generalized isomonodromic deformations of rank two linear differential equations using apparent singularities.}
arXiv:2003.08045

\bibitem{KomyoSaito}
{\sc A. Komyo, M.-H. Saito}, 
\emph{Explicit description of jumping phenomena on moduli spaces of parabolic connections and Hilbert schemes of points on surfaces.}  Kyoto J. Math. {\bf 59} (2019) 515-552.

\bibitem{Krichever}
{\sc I. M. Krichever}, 
\emph{Isomonodromy equations on algebraic curves, canonical transformations and Whitham equations.}
Dedicated to Yuri I. Manin on the occasion of his 65th birthday. Mosc. Math. J. {\bf 2} (2002) 717-752.

\bibitem{LSS}
{\sc F. Loray, M.-H. Saito, C. Simpson}, 
\emph{Foliations on the moduli space of rank two connections on the projective line minus four points.} Geometric and differential Galois theories, 117-170, S\'emin. Congr., 27, Soc. Math. France, Paris, 2013.

\bibitem{LS}
{\sc F. Loray, M.-H. Saito},
\emph{Lagrangian fibrations in duality on moduli spaces of rank 2 logarithmic connections over the projective line.} 
Int. Math. Res. Not. IMRN 2015, no. 4, 995-1043. 


\bibitem{MRGalois}
{\sc J. Martinet, J.-P. Ramis},
\emph{Th\'eorie de Galois diff\'erentielle et resommation.} 
Computer algebra and differential equations, 117-214,
Comput. Math. Appl., Academic Press, London, 1990. 

\bibitem{Oblezin}
{\sc S. Oblezin},
\emph{Isomonodromic deformations of sl(2)-Fuchsian systems on the Riemann sphere.}
Mosc. Math. J. {\bf 5} (2005)  415-441.

\bibitem{Okamoto} 
{\sc K. Okamoto}, 
\emph{Isomonodromic deformation and Painlev\'e equations, and the Garnier system.} 
J. Fac. Sci. Univ. Tokyo Sect. IA Math. {\bf 33} (1986) 575-618. 

\bibitem{vdPS}
{\sc M. van der Put, M.-H. Saito}, 
\emph{Moduli spaces for linear differential equations and the Painlev\'e equations.} 
Ann. Inst. Fourier (Grenoble) {\bf 59} (2009) 2611-2667. 

\bibitem{SS}
{\sc M.-H. Saito, S. Szab{\'o}},
\emph{Apparent singularities and canonical coordinates for moduli of connections.}
In preparation.

\bibitem{Szilard} 
{\sc S. Szab{\'o}}, 
\emph{The dimension of the space of Garnier equations with fixed locus of apparent singularities.}
Acta Sci. Math. (Szeged) {\bf 79} (2013)  107-128.

\bibitem{Varadarajan} 
{\sc V. S. Varadarajan}, 
\emph{Linear meromorphic differential equations: a modern point of view.} 
Bull. Amer. Math. Soc. {\bf 33} (1996)  1-42.
 
\end{thebibliography}

\end{document}